\newtheorem{defin}{}
\newtheorem{saetze}[defin]{}
\newtheorem{conjec}[defin]{}
\newtheorem{lemmas}[defin]{}
\newtheorem{folger}[defin]{}
\newtheorem{bemerk}[defin]{}
\newenvironment{theorem}  {\begin{saetze}\it {\bf Theorem:}}{\end{saetze}}
\newenvironment{lemma}    {\begin{lemmas}\it {\bf Lemma:}}{\end{lemmas}}
\newenvironment{remark}   {\begin{bemerk}\it {\bf Remark:}}{\end{bemerk}}
\newenvironment{proof}    {{\it Proof}:}{{\hfill \fillbox \bigskip}}
\newcommand{\fillbox}{\mbox{$\bullet$}}
\newcommand{\ra}{\rightarrow}
\newcommand{\ms}{\mapsto}
\newcommand{\ol}{\overline}
\newcommand{\g}{\mathfrak{g}}
\newcommand{\N}{\mathbb N}
\newcommand{\Z}{\mathbb Z}
\newcommand{\Hom}{\operatorname{Hom}}
\newcommand{\im}{\operatorname{im}}
\newcommand{\emb}{\hookrightarrow}
\newcommand{\pro}{\rightarrow \!\!\!\! \rightarrow}
\newcommand{\GL}{\operatorname{GL}}
\newenvironment{items}{\begin{list}{$\alph{item})$}
{\labelwidth18pt \leftmargin18pt \topsep3pt \itemsep1pt \parsep0pt}}
{\end{list}}
\begin{document}

\title{Group extensions with special properties}
\author{Andreas Distler and Bettina Eick}
\date{\today}
\maketitle

\begin{abstract}
Given a group $G$ and a $G$-module $A$, we show how to determine up to 
isomorphism the extensions $E$ of $A$ by $G$ so that $A$ embeds as 
smallest non-trivial term of the derived series or of the lower central 
series into $E$. 
\end{abstract}

\section{Introduction}

Given a group $G$ and a $G$-module $A$ an \emph{extension of $A$ by $G$} is 
a group $E$ that satisfies a short exact sequence $A \emb E \pro G$. The 
determination of extensions facilitates the construction of new groups from 
given ones and is an important tool in group theory. 
The isomorphism problem for group extensions asks to determine extensions
{\em up to isomorphism}: given $G$ and $A$, the aim is to determine a
complete and irredundant set of isomorphism type representatives of
extensions of $A$ by $G$. 

We consider two particular types of extensions.
Let $E$ be an extension of $A$ by $G$ and denote by $\ol{A}$ the image of 
the embedding $A \emb E$. We call $E$ a {\em lower central series extension} 
if $E$ is nilpotent and $\ol{A}$ coincides with the smallest
non-trivial term of the lower central series of $E$.
We call $E$ a {\em derived series extension} if $\ol{A}$ 
coincides with the smallest non-trivial term of the derived series of $E$. 

As central part of this paper we present criteria which allow us to identify 
those elements in the cohomology group $H^2(G, A)$ that correspond to the 
desired extensions. For this purpose we use the Schur multiplier of a pair
of groups and a map that can be considered as generalisation of the 
projection map in the universal coefficients theorem. Based on this, we
then describe practical methods to determine up to isomorphism all lower 
central series extensions respectively all derived series extensions of 
$A$ by $G$. 

We apply our algorithms in the construction of groups with large derived
length and small composition length. In particular, we construct two new 
examples of groups of derived length $10$ and composition length $24$. It
is conjectured that $24$ is the minimal possible composition length for
a group of derived length $10$, see \cite{Gla05}.

\section{Extensions}

In this section we briefly recall some notation. For a more detailed
introduction into the theory of group extensions we refer to 
\cite[Chapter 11]{Rob82}. Throughout this section, let $G$ be an arbitrary 
group and let $A$ be an abelian group equipped with a $G$-module structure. 
We write $G$ as multiplicative group and $A$ as additive group. We denote 
the action of $G$ on $A$ by $a^g$ for $a \in A$ and $g \in G$ and we write 
$\ol{g} : A \ra A : a \ms a^g$. 

Let $E = \{(g,a) \mid g \in G, a \in A\}$. Then $\delta \in Z^2(G,A)$
defines a group structure on $E$ via 
\[ (g,a) (g',a') = (gg', a^{g'} + a' + \delta(g,g')). \]
The module $A$ embeds into $E$ via $A \emb E : a \ms (1,a)$ and $E$ projects 
onto $G$ via $E \pro G : (g,a) \ms g$. Thus $E$ is an extension of $A$ by 
$G$. It is well-known that each
extension of $A$ by $G$ is isomorphic to an extension obtained by some
$\delta \in Z^2(G,A)$. We usually identify $A$ with its image $\ol{A}
= \{ (1,a) \mid a \in A\}$.

Let $E_1$ and $E_2$ be two extensions of $A$ by $G$. We say that $E_1$
is \emph{strongly isomorphic} to $E_2$ if there exists an isomorphism
$\iota : E_1 \ra E_2$ with $A^\iota = A$. Further, $E_1$ is
\emph{equivalent} to  $E_2$ if there exists an isomorphism $\iota :
E_1 \ra E_2$ with $A^\iota = A$ so that $\iota$ induces the
identity on $A$ and on $E_1/A \cong G \cong  E_2/A$.

The group of compatible pairs $Comp(G,A)$ is defined as subgroup of
the direct product $Aut(G)\times Aut(A)$ via
\[ Comp(G,A) = \left\{ (\eta, \nu) \in Aut(G) \times Aut(A) \mid
     \ol{g^\eta} = \nu^{-1} \ol{g} \nu \mbox{ for all } g \in G \right\}. \] 
If the action of $G$ on $A$ is trivial then $Comp(G,A)$ equals
$Aut(G)\times Aut(A)$. An action of $Comp(G,A)$ on $Z^2(G,A)$ is
given via
\[ \delta^{(\eta, \nu)} : G \times G \ra A : (g,h) \ms 
   \left(\delta(g^{\eta^{-1}}, h^{\eta^{-1}})\right)^\nu.\]

For $\delta \in Z^2(G, A)$ we denote $[\delta] = \delta + B^2(G,A) \in 
H^2(G,A)$. The subgroup $B^2(G,A)$ of $Z^2(G,A)$ is setwise invariant 
under the action of $Comp(G,A)$ and hence $Comp(G,A)$ acts on $H^2(G,A)$
via $[\delta]^{(\eta, \nu)} = [\delta^{(\eta, \nu)}]$.

\begin{theorem} \label{sisom} {\rm (Robinson \cite{Rob81})} \\
Let $\delta, \delta_1, \delta_2 \in Z^2(G,A)$ and denote their corresponding 
extensions by $E, E_1$ and $E_2$. Write $Aut_A(E) = \{ \alpha \in Aut(E) 
\mid A^\alpha = A \}$. 
\begin{items}
\item[\rm (1)]
Then $E_1$ is strongly isomorphic to $E_2$ if and only if there exists 
$(\eta, \nu) \in Comp(G,A)$ with $[\delta_1]^{(\eta, \nu)} = [\delta_2]$.
\item[\rm (2)]
The homomorphism $\varphi : Aut_A(E) \ra Aut(G) \times Aut(A) : \alpha \ms 
\alpha_{E/A} \times \alpha_A$ satisfies $ker(\varphi) \cong Z^1(G,A)$ and 
$im(\varphi) = Stab_{Comp(G,A)}([\delta])$.
\end{items}
\end{theorem}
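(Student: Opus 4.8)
The plan is to translate statements about (strong) isomorphisms of the concrete extension groups into statements about $2$-cocycles, and then to read off (2) from (1) by specialising to $\delta_1=\delta_2=\delta$. Throughout I regard an extension built from $\delta\in Z^2(G,A)$ as the set $G\times A$ with the displayed multiplication, I use that $A=\{(1,a)\}$ is normal with $E/A\cong G$ via $(g,a)A\ms g$ and that conjugation in $E$ realises the given $G$-action on $A$, and — after replacing each cocycle by a cohomologous normalised one, which changes neither the isomorphism type nor the cohomology class — I assume $(g,a)=(g,0)(1,a)$ and that $(1,0)$ is the identity.

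\emph{Forward direction of (1).} Let $\iota\colon E_1\ra E_2$ be an isomorphism with $A^\iota=A$. Restriction to $A$ gives $\nu\in Aut(A)$, and since $A$ is the kernel of both projections to $G$, $\iota$ descends to $\eta\in Aut(E_1/A)=Aut(E_2/A)=Aut(G)$. That $(\eta,\nu)\in Comp(G,A)$ follows by chasing the homomorphism property of $\iota$ through the conjugation action: a preimage of $g$ in $E_1$ acts on $A$ as $\ol g$, its $\iota$-image is a preimage of $g^\eta$ in $E_2$, and this forces $\ol{g^\eta}=\nu^{-1}\ol g\nu$. To compare cocycles, write $(g,0)^\iota=(g^\eta,\mu(g^\eta))$ for a function $\mu\colon G\ra A$ with $\mu(1)=0$; applying $\iota$ to $(g,0)(g',0)=(gg',\delta_1(g,g'))$, expanding both sides using the multiplication of $E_2$ and the compatibility relation $(a^\nu)^{g^\eta}=(a^g)^\nu$, and cancelling, one is left with exactly the identity expressing that $\delta_1^{(\eta,\nu)}$ and $\delta_2$ differ by the $2$-coboundary built from $\mu$. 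Hence $\delta_2-\delta_1^{(\eta,\nu)}\in B^2(G,A)$, i.e.\ $[\delta_1]^{(\eta,\nu)}=[\delta_2]$.

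\emph{Backward direction of (1).} Conversely, suppose $(\eta,\nu)\in Comp(G,A)$ satisfies $[\delta_1]^{(\eta,\nu)}=[\delta_2]$; choose a $1$-cochain $\mu\colon G\ra A$ with $\mu(1)=0$ realising the coboundary $\delta_1^{(\eta,\nu)}-\delta_2\in B^2(G,A)$, and set $\iota\colon E_1\ra E_2,\ (g,a)\ms(g^\eta,\,a^\nu+\mu(g^\eta))$. A direct check — pulling the $G$-action through $\nu$ by compatibility of $(\eta,\nu)$ and absorbing the discrepancy between $\delta_1^{(\eta,\nu)}$ and $\delta_2$ via the coboundary relation for $\mu$ — shows $\iota$ is a homomorphism; it is clearly bijective, restricts to $\nu$ on $A$ (so $A^\iota=A$), and induces $\eta$ on $G$. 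Thus $E_1$ is strongly isomorphic to $E_2$; note in particular that this $\iota$ realises $\nu$ on $A$ and $\eta$ on $E/A$.

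\emph{Deducing (2).} Specialise to $E_1=E_2=E$ with $\delta_1=\delta_2=\delta$. A strong self-isomorphism of $E$ is exactly an element of $Aut_A(E)$, so the forward direction gives $\varphi(\alpha)=\alpha_{E/A}\times\alpha_A\in Comp(G,A)$ fixing $[\delta]$, whence $im(\varphi)\subseteq Stab_{Comp(G,A)}([\delta])$; the backward direction produces, for every $(\eta,\nu)$ in that stabiliser, an $\alpha\in Aut_A(E)$ with $\varphi(\alpha)=(\eta,\nu)$, giving equality. For the kernel, $\alpha\in ker(\varphi)$ means $\alpha$ induces the identity on $E/A$ and on $A$, so $\alpha(g,a)=(g,\,a+\mu(g))$, where $\mu\colon G\ra A$ has $\mu(g)$ the second coordinate of $\alpha(g,0)$ and $\mu(1)=0$; requiring $\alpha$ to respect $(g,0)(g',0)$ forces $\mu(gg')=\mu(g)^{g'}+\mu(g')$, i.e.\ $\mu\in Z^1(G,A)$, and conversely every such $\mu$ gives an element of $ker(\varphi)$. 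Since $\alpha\ms\mu$ is a bijection and composing two such automorphisms adds the corresponding cocycles, $ker(\varphi)\cong Z^1(G,A)$.

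\emph{Main obstacle.} The only real work is the $2$-cocycle bookkeeping in the two directions of (1): one must carry the correction term $\mu$ throughout — without it one would get the false on-the-nose equality $\delta_1^{(\eta,\nu)}=\delta_2$ rather than $[\delta_1]^{(\eta,\nu)}=[\delta_2]$ — and one must verify that the twisting action $\delta\ms\delta^{(\eta,\nu)}$ from the excerpt is precisely what makes these computations close up given the sign conventions in the multiplication formula. The verification that $(\eta,\nu)$ is a compatible pair, the bijectivity of $\iota$, and the kernel identification are then routine.
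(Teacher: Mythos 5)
Your argument is correct: the cocycle bookkeeping in both directions of (1), the compatibility check $\ol{g^\eta}=\nu^{-1}\ol{g}\nu$ via conjugation, and the identification of $\ker(\varphi)$ with $Z^1(G,A)$ through $\mu(gg')=\mu(g)^{g'}+\mu(g')$ all close up as claimed. Note that the paper itself gives no proof of this statement --- it is quoted from Robinson \cite{Rob81} --- so there is nothing to compare against; your direct verification is the standard one and is complete.
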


\section{Cohomology and Schur multipliers}
\label{pairs}

Let $G$ be an arbitrary group and let $A$ be a trivial $G$-module. In this 
section we introduce a map that links $H^2(G,A)$ to the Schur multiplier of 
a pair of groups. This map will play a central role in our later applications.

\subsection{Twisted cocycles}

For $\delta \in Z^2(G,A)$ we define
\begin{equation}
 \hat{\delta} : G \times G \ra A :
                  (g,h) \ms \delta(g,h) - \delta(h,g)
                       - \delta((hg)^{-1}, hg) 
                       + \delta((hg)^{-1}, gh). 
\end{equation}

For later use we note that $h\in Z(G)$ implies
\begin{equation}
\label{eq_delta}
\hat{\delta}(g,h) = \delta(g,h) - \delta(h,g).
\end{equation}

The following lemma gives an alternative description for $\hat{\delta}$.
Its proof is a direct computation which we omit here. For any two group
elements $g$ and $h$ let $[g,h] = g^{-1} h^{-1} g h$ the commutator of 
$g$ and $h$.

\begin{lemma}
\label{comms}
Let $G$ be a group, $A$ a trivial $G$-module and $\delta \in Z^2(G,
A)$. Then in the extension of $A$ by $G$ via $\delta$ the equation
$[(g,a),(h,b)] = ([g,h],\hat{\delta}(g,h))$ holds for all $g,h \in G$
and $a,b \in A$.
\end{lemma}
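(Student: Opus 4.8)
The plan is to compute the commutator $[(g,a),(h,b)]$ straight from the multiplication rule of the extension $E$ and then to rewrite the resulting $A$-component, via the cocycle identity, so that it becomes $\hat{\delta}(g,h)$. Since $A$ is a trivial $G$-module the product in $E$ reduces to $(g,a)(g',a') = (gg', a + a' + \delta(g,g'))$; with $\delta$ normalised the identity of $E$ is $(1,0)$, and one checks at once that $\ol{A} = \{(1,a) \mid a \in A\}$ lies in $Z(E)$. Hence $(g,a) = (g,0)(1,a)$ and $(h,b) = (h,0)(1,b)$ with central second factors, so $[(g,a),(h,b)] = [(g,0),(h,0)]$ and it suffices to treat the case $a = b = 0$.

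Next I would record the inverses: from $(g,0)(g^{-1},y) = (1,\,y + \delta(g,g^{-1}))$ one reads off $(g,0)^{-1} = (g^{-1}, -\delta(g,g^{-1}))$, and similarly $(h,0)^{-1} = (h^{-1}, -\delta(h,h^{-1}))$. Multiplying
\[ [(g,0),(h,0)] = (g^{-1}, -\delta(g,g^{-1}))\,(h^{-1}, -\delta(h,h^{-1}))\,(g,0)\,(h,0) \]
out one factor at a time, the $G$-component telescopes to $g^{-1}h^{-1}gh = [g,h]$, while the $A$-component emerges as
\[ -\delta(g,g^{-1}) - \delta(h,h^{-1}) + \delta(g^{-1},h^{-1}) + \delta(g^{-1}h^{-1},g) + \delta(g^{-1}h^{-1}g,h). \]

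What remains, and is the only real work, is to identify this five-term sum with $\hat{\delta}(g,h) = \delta(g,h) - \delta(h,g) - \delta((hg)^{-1},hg) + \delta((hg)^{-1},gh)$, using $(hg)^{-1} = g^{-1}h^{-1}$. Writing the cocycle relation as $\delta(xy,z) = \delta(y,z) + \delta(x,yz) - \delta(x,y)$, one applies it first with $(x,y,z) = (g^{-1}h^{-1},g,h)$ to replace $\delta(g^{-1}h^{-1},g) + \delta(g^{-1}h^{-1}g,h)$ by $\delta(g,h) + \delta(g^{-1}h^{-1},gh)$, and then with $(x,y,z) = (g^{-1},h^{-1},hg)$ and with $(x,y,z) = (h,h^{-1},hg)$, each time invoking the normalisation to cancel terms with argument $1$ and the identity $\delta(g,g^{-1}) = \delta(g^{-1},g)$ (itself an instance of the cocycle relation). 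This is pure bookkeeping with no further ideas, which is why the authors omit it; I expect the only obstacle to be spotting the three substitutions above. As a consistency check, when $h \in Z(G)$ one has $hg = gh$, so the last two terms of $\hat{\delta}(g,h)$ cancel and equation \eqref{eq_delta} is recovered.
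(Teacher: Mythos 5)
Your proposal is correct: the paper explicitly omits this proof as ``a direct computation,'' and what you supply is precisely that computation, with the five-term $A$-component and the three cocycle substitutions all checking out (I verified that $(g^{-1}h^{-1},g,h)$, $(g^{-1},h^{-1},hg)$ and $(h,h^{-1},hg)$, together with $\delta(g,g^{-1})=\delta(g^{-1},g)$ and normalisation, reduce the sum to $\hat{\delta}(g,h)$). The only point worth making explicit is that the normalisation $\delta(1,1)=0$ is forced by the paper's convention that $a\mapsto(1,a)$ is the embedding of $A$, so your use of it is legitimate.
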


\subsection{The Schur multiplier of a pair of groups}

We briefly recall the construction of the non-abelian exterior product
and the non-abelian tensor product as introduced by Brown and Loday 
\cite{BLo87}, see also \cite{BJR87} for details. 

Let $H \leq G$ and let $F$ be the free group on the symbols 
$\{ g \wedge h \mid g \in G, h \in H\}$. For $g, h \in G$ denote
${}^h g := h g h^{-1} = g^{h^{-1}}$. Let $R$ be the normal 
subgroup of $F$ generated by the relations
\begin{eqnarray*}
g g' \wedge h &=& ({}^g g' \wedge {}^g h) (g \wedge h) 
     \;\;\; \mbox{ for } g, g' \in G, h \in H \\
g \wedge h h' &=& (g \wedge h) ({}^h g \wedge {}^h h') 
     \;\;\; \mbox{ for } g \in G, h, h' \in H \\
h \wedge h &=& 1 
     \;\;\; \mbox{ for } h \in H. 
\end{eqnarray*}
Then $G \wedge H := F/R$ is the non-abelian exterior product of $G$
and $H$. If the relations $h \wedge h = 1$ are omitted, then the 
resulting quotient is the non-abelian tensor product of $G$ and $H$.

By construction, there is a natural homomorphism
\[ 
\varphi : G \wedge H \ra [G,H] : g \wedge h \ms ghg^{-1}h^{-1} =
[g^{-1},h^{-1}].
\]
The kernel of $\varphi$ is denoted with $M(G,H)$ and is called the
{\em Schur multiplier} of the pair of groups $(G,H)$, see \cite{Ell98b}
for background. It is known that $M(G,H)$ is an abelian group and 
the (ordinary) Schur multiplier $M(G)$ of the group $G$ can be
obtained as $M(G) = M(G,G)$ by Hopf's formula. Further, if $H \leq Z(G)$, 
then $M(G,H) = G \wedge H$ holds.

\subsection{The linking map}

The following lemma provides the first step for the definition of the
linking map.

\begin{lemma}
\label{wedgehom}
Let $G$ be a group, $A$ a trivial $G$-module, $\delta \in Z^2(G, A)$
and $H \leq Z(G)$. Then the following map is a group homomorphism
\[ \ol{\delta} : M(G,H) \ra A : g \wedge h \ms \hat{\delta}(g,h). \]
\end{lemma}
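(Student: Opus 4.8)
The plan is to define $\bar\delta$ first on the free group $F$ on the symbols $g\wedge h$ (with $g\in G$, $h\in H$) by sending the generator $g\wedge h$ to $\hat\delta(g,h)\in A$; since $A$ is abelian this extends uniquely to a homomorphism $F\to A$. The main work is to verify that all three families of defining relations of $G\wedge H$ lie in the kernel, so that the map factors through $G\wedge H$; and then to observe that $\bar\delta$ vanishes on $M(G,H)$ automatically is \emph{not} what we want — rather, since $H\le Z(G)$ we have $M(G,H)=G\wedge H$ by the remark recalled above, so once the map is defined on $G\wedge H$ we are done. (So really the task reduces to: show $\hat\delta$ respects the three relations.)

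First I would record the simplification available to us: because $H\le Z(G)$, the conjugation actions ${}^g g'$, ${}^h g$ etc.\ that appear in the relations collapse. In the first relation ${}^g h = ghg^{-1} = h$ since $h\in Z(G)$, so it reads $gg'\wedge h = ({}^g g'\wedge h)(g\wedge h)$. In the second relation ${}^h g = g$ and ${}^h h' = h'$ since $h\in Z(G)$, so it reads $g\wedge hh' = (g\wedge h)(g\wedge h')$, i.e.\ $\bar\delta$ must be additive in the second argument. The third relation just demands $\hat\delta(h,h)=0$, which is immediate from the definition of $\hat\delta$ (all four terms cancel in pairs when $g=h$). So the three things to check are: (i) $\hat\delta(gg',h) = \hat\delta({}^g g',h) + \hat\delta(g,h)$; (ii) $\hat\delta(g,hh') = \hat\delta(g,h) + \hat\delta(g,h')$; (iii) $\hat\delta(h,h)=0$.

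The cleanest route to (i) and (ii) is to use Lemma~\ref{comms}: in the extension $E$ of $A$ by $G$ via $\delta$, one has $[(g,0),(h,0)] = ([g,h],\hat\delta(g,h))$, and since $h\in Z(G)$ the commutator $[g,h]$ is trivial in $G$, so $[(g,0),(h,0)] = (1,\hat\delta(g,h))$ lies in the central copy $\bar A$ of $A$. Thus $\hat\delta(g,h)$ is, up to the identification $\bar A\cong A$, literally the commutator in $E$ of lifts of $g$ and $h$. Now relations (i) and (ii) become the standard commutator identities $[xy,z]=[x,z]^y[y,z]$ and $[x,yz]=[x,z][x,y]^z$ in the group $E$, combined with the fact that these particular commutators are central (so the conjugation superscripts disappear) and that $\bar A$ is written additively; the twist by ${}^g g'$ on the $G$-side is exactly what matches the conjugation in the group identity. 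Relation (iii) is the identity $[x,x]=1$. One then transfers these back through Lemma~\ref{comms} to get the claimed identities for $\hat\delta$, hence the factorisation through $G\wedge H=M(G,H)$.

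The main obstacle is purely bookkeeping: one must be careful that the action of $G$ on $A$ is trivial (given in the hypothesis), so that the conjugation-by-$g$ appearing when one expands $[(g,0)(g',0),(h,0)]$ acts trivially on the $A$-coordinate, while the $G$-component genuinely gets conjugated — this is what produces the $\hat\delta({}^g g',h)$ term rather than $\hat\delta(g',h)$. A reader expecting $\hat\delta$ to be bilinear (rather than a "crossed" homomorphism in the first variable) might be tripped up here, but since we are ultimately landing in $M(G,H)$ where $[G,H]$ has been killed, and since in fact $[g,h]=1$ already in $G$, one could alternatively check (i) and (ii) by a direct cocycle computation using the $2$-cocycle identity for $\delta$ together with equation~\eqref{eq_delta}, which gives $\hat\delta(g,h)=\delta(g,h)-\delta(h,g)$ whenever $h\in Z(G)$ — this last reduction makes the three verifications short enough to be "a direct computation", matching the style in which the paper dispatches Lemma~\ref{comms}.
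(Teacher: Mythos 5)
Your proposal is correct and follows essentially the same route as the paper: define the map on the free group $F$, use Lemma~\ref{comms} to interpret $\hat{\delta}(g,h)$ as the (central) commutator $[(g,0),(h,0)]$ in the extension $E$, verify the three defining relations via the standard commutator identities together with the triviality of the $G$-action on $A$, and conclude via $M(G,H)=G\wedge H$ for $H\leq Z(G)$. The only cosmetic difference is that you map $F$ directly into $A$ rather than into $E$ with image in $\ol{A}$, which changes nothing.
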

   
\begin{proof}
Let $E$ be the extension of $A$ by $G$ defined by $\delta$ and define
$\alpha : F \ra E$ as the group homomorphism extending $g \wedge h \ms
([g,h], \hat{\delta}(g,h))$. We note that
$[(g,0),(h,0)] = ([g,h], \hat{\delta}(g,h)) = (1, \hat{\delta}(g,h))$ 
for each $g \in G$ and $h \in H$ by Lemma \ref{comms} and the fact that
$H$ is central in $G$.

We show that $R$ is contained in the kernel of $\alpha$. First, consider
the relation $h \wedge h$ for $h \in H$. As $\hat{\delta}(g,g) = 0$
for all $g\in G$ it follows that $\alpha(h \wedge h) =
([h,h],\hat{\delta}(h,h)) = (1,0)$ in $E$. Next, consider the relation
$g \wedge h h' = (g \wedge h) ({}^h g \wedge {}^h h') = (g \wedge h) 
(g \wedge h')$, as $H$ is central in $G$. Then 
\begin{eqnarray*}
(1, \hat{\delta}(g, hh'))
  &=& [(g, 0), (hh', 0)] \\
  &=& [(g, 0), (h, -\delta(h,h'))(h', 0)] \\
  &=& [(g, 0), (h', 0)] [(g, 0), (h, -\delta(h,h'))]^{(h',0)} \\
  &=& ([g,h'], \hat{\delta}(g,h'))([g,h], \hat{\delta}(g,h))^{(h',0)} \\
  &=& (1, \hat{\delta}(g,h'))(1, \hat{\delta}(g,h))^{(h',0)} \\
  &=& (1, \hat{\delta}(g,h'))(1, \hat{\delta}(g,h)) \\
  &=& (1, \hat{\delta}(g,h) + \hat{\delta}(g,h')).
\end{eqnarray*}
Thus $\alpha( g \wedge h h') = \alpha((g \wedge h) (g \wedge h'))$ as
desired. Finally consider the relation $g g' \wedge h = ({}^g g' \wedge 
{}^g h) (g \wedge h) = ({}^g g' \wedge h)(g \wedge h)$, as $H$ is central
in $G$. Then using a similar computation as above we obtain
\begin{eqnarray*}
(1, \hat{\delta}(gg', h))
  &=& [(gg', 0), (h, 0)] \\
  &=& [(({}^g g') \cdot g, 0), (h, 0)] \\
  &=& (1, \hat{\delta}({}^g g', h) + \hat{\delta}(g,h)) 
\end{eqnarray*}
as desired. In summary, $R \leq ker(\alpha)$ and thus $\alpha$ induces
a group homomorphism $G \wedge H \ra E$ whose image is contained in $A$.
As $G \wedge H = M(G,H)$ for the central subgroup $H$ of $G$, the result 
now follows.
\end{proof}

Lemma \ref{wedgehom} leads to the following definition for the linking
map between $H^2(G, A)$ and the Schur multiplier of a pair of groups.

\begin{theorem}
\label{uct}
Let $G$ be a group, $A$ a trivial $G$-module and $H \leq Z(G)$. Then 
\begin{equation}
\label{eq_kappa}
\kappa_H : Z^2(G, A) \ra \Hom(M(G,H), A): \delta \ms \ol{\delta}
\end{equation}
is a homomorphism of abelian groups with $B^2(G,A) \leq
\ker(\kappa_H)$. Thus it induces
\begin{equation}
\ol{\kappa}_H : H^2(G, A) \ra \Hom(M(G,H), A): [\delta] \ms \ol{\delta}.
\end{equation}
\end{theorem}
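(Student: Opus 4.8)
The plan is to verify each of the three assertions in turn, leaning on Lemma~\ref{wedgehom} for the well-definedness of the target map $\ol\delta$. First I would check that $\kappa_H$ is a homomorphism of abelian groups. The abelian group structure on $Z^2(G,A)$ is pointwise addition, and the structure on $\Hom(M(G,H),A)$ is again pointwise addition. Since $\hat\delta$ is built from $\delta$ by evaluating at four fixed arguments and taking a $\Z$-linear combination (see the defining formula for $\hat\delta$), the assignment $\delta \ms \hat\delta$ is $\Z$-linear in $\delta$; hence $\widehat{\delta_1+\delta_2} = \hat{\delta_1} + \hat{\delta_2}$ as maps $G\times G \ra A$. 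Restricting to the generators $g\wedge h$ of $M(G,H)$ gives $\ol{\delta_1+\delta_2} = \ol{\delta_1} + \ol{\delta_2}$ in $\Hom(M(G,H),A)$, which is exactly additivity of $\kappa_H$. (Here I use that each $\ol\delta$ really is a homomorphism $M(G,H)\ra A$, which is the content of Lemma~\ref{wedgehom}, so that the codomain is correct and pointwise addition of homomorphisms is again a homomorphism.)

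Next I would show $B^2(G,A) \leq \ker(\kappa_H)$. Let $\delta = \partial\mu$ be a coboundary, so $\delta(g,h) = \mu(g) + \mu(h) - \mu(gh)$ for some map $\mu : G \ra A$ with (in the trivial-module case) no twisting. The cleanest route is to plug this into the defining formula for $\hat\delta$ and observe massive cancellation: each of the four terms $\delta(g,h)$, $-\delta(h,g)$, $-\delta((hg)^{-1},hg)$, $+\delta((hg)^{-1},gh)$ contributes values of $\mu$ at various products, and since $A$ is a trivial module these are just elements of $A$ added up; using $gh$ and $hg$ are in general distinct but $\mu((hg)^{-1}\cdot hg) = \mu(1)$ and $\mu((hg)^{-1}\cdot gh) = \mu((hg)^{-1}gh)$, one checks the sum telescopes to $0$ for all $g,h$. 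Alternatively, and perhaps more conceptually: a coboundary $\delta$ yields an extension $E$ that is equivalent to the split extension, in which $[(g,0),(h,0)] = ([g,h],0)$; comparing with Lemma~\ref{comms}, which says this commutator equals $([g,h],\hat\delta(g,h))$, forces $\hat\delta(g,h)=0$, hence $\ol\delta = 0$. I would present whichever of these is shorter; the conceptual one is essentially immediate given Lemma~\ref{comms}.

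Finally, the induced map $\ol\kappa_H$ on $H^2(G,A)$ exists by the universal property of quotients: a homomorphism of abelian groups killing a subgroup factors uniquely through the quotient by that subgroup. So $\ol\kappa_H([\delta]) := \kappa_H(\delta) = \ol\delta$ is well-defined and is automatically a homomorphism $H^2(G,A) \ra \Hom(M(G,H),A)$.

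I do not expect a genuine obstacle here; the statement is essentially a packaging of Lemma~\ref{wedgehom} together with elementary linearity. The only point demanding a little care is the $B^2 \leq \ker$ claim if one does it by direct computation, because the four-term formula for $\hat\delta$ involves the noncommuting products $gh$ and $hg$, so one must track the $\mu$-values carefully rather than assuming naive cancellation; routing through Lemma~\ref{comms} and the split extension sidesteps this entirely and is the argument I would actually write.
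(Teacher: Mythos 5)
Your overall structure (linearity of $\delta \ms \hat{\delta}$, a coboundary computation, then the universal property of the quotient) is the same as the paper's, and the first and third steps are fine. The genuine problem is in the middle step: for a coboundary $\delta = \partial\mu$ the four-term sum defining $\hat{\delta}$ does \emph{not} telescope to $0$ for all $g,h$. Writing it out, the values $\mu(g),\mu(h),\mu(gh),\mu(hg),\mu((hg)^{-1})$ all cancel and one is left with $\hat{\delta}(g,h) = \mu(1) - \mu\bigl((hg)^{-1}gh\bigr) = \mu(1) - \mu([g,h])$, which is nonzero for general $g,h$. The same residue infects your ``conceptual'' route: transporting the commutator $[(g,0),(h,0)]$ through the equivalence with the split extension picks up exactly the correction term $\mu([g,h])$, so Lemma~\ref{comms} does not by itself force $\hat{\delta}(g,h)=0$ on all of $G\times G$ — contrary to your closing remark, that route does not sidestep the difficulty, it merely hides it. What saves the statement is that $\ol{\delta}$ is only evaluated on generators $g \wedge h$ with $h \in H \leq Z(G)$, where $[g,h]=1$ and hence $\mu(1)-\mu([g,h])=0$. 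The paper makes this transparent by first invoking the simplification \eqref{eq_delta}, valid for central $h$, which reduces the four terms to $\delta(g,h)-\delta(h,g)=\epsilon(hg)-\epsilon(gh)$, and this vanishes because $hg=gh$. So your proof is repairable in one line, but as written the key cancellation claim is false, and the point where centrality of $H$ enters is exactly the point you declared unproblematic.
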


\begin{proof}
From Lemma \ref{wedgehom} it follows that $\kappa_H$ is well-defined. The
definition of $\hat{\delta}$  yields that $\kappa_H$ is compatible with the 
addition and inversion in $Z^2(G,A)$ and hence is a group homomorphism.
It remains to show that $B^2(G,A) \leq \ker(\kappa_H)$. Let $\delta \in 
B^2(G,A)$. Then there exists $\epsilon \in C^1(G,A)$ with $\delta(g,h) = 
\epsilon(g) + \epsilon(h) - \epsilon(gh)$ for all $g, h \in G$. Using 
\eqref{eq_delta} it follows for all $g \in G, h\in H$ that 
$\hat{\delta}(g,h) = \epsilon(hg)- \epsilon(gh)=0$. Thus $\delta \in
\ker(\kappa_H)$.
\end{proof}

We note that the universal coefficients theorem for cohomology asserts 
the existence of a short exact sequence
\[ Ext(H_1(G, \Z), A) \emb H^2(G, A) \pro \Hom(M(G), A). \]
If $G$ is abelian, then $\kappa_G$ coincides with the projection in the 
universal coefficients sequence.

\begin{remark}
\label{wedgeact}
Let $H \leq Z(G)$ be a characteristic subgroup of $G$. Then the action of 
$Comp(G,A)$ on $Z^2(G,A)$ is compatible with $\kappa_H$ and thus defines an 
action of $Comp(G,A)$ on $\Hom(M(G,H),A)$.
\end{remark}

For a non-trivial nilpotent group $G$ let $\lambda(G)$ denote its smallest
non-trivial subgroup of the lower central series of $G$. To shorten notation
in our later section we then also denote
\[ \kappa = \kappa_{\lambda(G)} \;\;\; \mbox{ and } \;\;\;
   \ol{\kappa} = \ol{\kappa}_{\lambda(G)}. \]

\section{Lower central series extensions}
\label{lcs}

In this section we describe a construction for a set of isomorphism type 
representatives of lower central series extensions of $A$ by $G$.
If any such extension exists, then $G$ is a nilpotent group and $A$ is
a non-trivial abelian group with a trivial $G$-module structure. We assume 
this throughout the section and we also assume that $G$ is non-trivial to
obtain proper extensions of $A$.  The
following theorem provides a characterisation of the cocycles defining
lower central series extensions.

\begin{theorem}
\label{charlce}
Let $G$ be a non-trivial nilpotent group, $A$ a non-trivial group with 
trivial $G$-module structure and $\delta \in Z^2(G, A)$. Then the extension 
of $A$ by $G$ via $\delta$ is a lower central series extension if and only 
if $\kappa(\delta)$ is surjective.
\end{theorem}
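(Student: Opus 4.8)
The plan is to translate the defining property of a lower central series extension into a condition on a single term of the lower central series of $E$, and then to compute that term explicitly via Lemma~\ref{comms}.

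Let $c$ be the nilpotency class of $G$ and write $\gamma_i$ for the terms of the lower central series, so that $\lambda(G) = \gamma_c(G) \neq 1$ and $\gamma_{c+1}(G) = 1$; in particular $\lambda(G) \leq Z(G)$, which is precisely what makes $\kappa = \kappa_{\lambda(G)}$ defined. Since the $G$-module structure on $A$ is trivial, $\ol{A}$ lies in $Z(E)$, and the projection $E \pro G$ maps $\gamma_i(E)$ onto $\gamma_i(G)$ for every $i$. From $\gamma_{c+1}(G) = 1$ we get $\gamma_{c+1}(E) \leq \ol{A} \leq Z(E)$, hence $\gamma_{c+2}(E) = 1$, so $E$ is automatically nilpotent. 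I would then prove the reduction that $E$ is a lower central series extension if and only if $\gamma_{c+1}(E) = \ol{A}$: if $\gamma_{c+1}(E) = \ol{A}$ then $\gamma_{c+2}(E) = [\ol{A},E] = 1$ and $\ol{A} \neq 1$, so $\ol{A}$ is the smallest non-trivial term of the lower central series of the nilpotent group $E$; conversely, if $\ol{A}$ equals the smallest non-trivial term $\gamma_k(E)$, then projecting onto $E/\ol{A} \cong G$ gives $\gamma_k(G) = 1$, whence $k \geq c+1$ because $\gamma_c(G) \neq 1$, and then $\ol{A} = \gamma_k(E) \leq \gamma_{c+1}(E) \leq \ol{A}$ forces $\gamma_{c+1}(E) = \ol{A}$.

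It then remains to identify $\gamma_{c+1}(E) = [\gamma_c(E), E] = [E, \gamma_c(E)]$ with $\im(\kappa(\delta))$, viewed inside $\ol{A} \cong A$. Since the projection carries $\gamma_c(E)$ onto $\gamma_c(G) = \lambda(G)$, every element of $\gamma_c(E)$ has the form $(x,a)$ with $x \in \lambda(G)$, while an arbitrary element of $E$ is $(g,b)$ with $g \in G$. By Lemma~\ref{comms}, $[(g,b),(x,a)] = ([g,x],\hat{\delta}(g,x))$, and $[g,x] \in [G,\lambda(G)] = \gamma_{c+1}(G) = 1$, so this commutator equals $(1,\hat{\delta}(g,x))$. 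As $g$ ranges over $G$ and $x$ over $\lambda(G)$ these commutators generate $\gamma_{c+1}(E)$, so under the identification $\ol{A} \cong A$ we obtain $\gamma_{c+1}(E) = \langle \hat{\delta}(g,x) \mid g \in G,\ x \in \lambda(G) \rangle$. Because $\lambda(G) \leq Z(G)$ gives $M(G,\lambda(G)) = G \wedge \lambda(G)$, which is generated by the symbols $g \wedge x$, on which $\kappa(\delta)$ takes the value $\hat{\delta}(g,x)$, this subgroup is exactly $\im(\kappa(\delta))$. Hence $\gamma_{c+1}(E) = \ol{A}$ if and only if $\kappa(\delta)$ is surjective, which together with the reduction proves the theorem.

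I expect the only delicate point to be the reduction step: a priori $\ol{A}$ could coincide with some term of the lower central series other than $\gamma_{c+1}(E)$, and it is the index estimate $k \geq c+1$, obtained by passing to the quotient $G$, that excludes this. Everything else — centrality of $\ol{A}$ in $E$ and of $\lambda(G)$ in $G$, the behaviour of the lower central series under the projection $E \pro G$, and the commutator formula of Lemma~\ref{comms} — combines into a routine computation.
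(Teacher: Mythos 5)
Your proof is correct and follows essentially the same route as the paper: reduce to the condition $\gamma_{c+1}(E)=\ol{A}$ and compute $\gamma_{c+1}(E)=[E,\gamma_c(E)]$ via Lemma~\ref{comms} as the image of $\kappa(\delta)$. The only difference is that you spell out the reduction step (why $\ol{A}$ being the smallest non-trivial term forces $\ol{A}=\gamma_{c+1}(E)$, and why $E$ is automatically nilpotent), which the paper asserts without detail.
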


\begin{proof}
Let $E$ denote the extension of $A$ by $G$ via $\delta$ and let $E =
\lambda_1(E) > \lambda_{2}(E) > \dots$ be the lower central series of
$E$. Further let $c$ be the class of $G$.

Suppose that $E$ is a lower central series extension. Then $E$ is nilpotent 
of class $c+1$ and the image $\ol{A}$ of $A$ in $E$ satisfies that $\ol{A} 
= \lambda_{c+1}(E) = [E, \lambda_{c}(E)]$. This implies that $\ol{A} =
\langle [(g,a), (h,b)] \mid a, b \in A, g \in G, h \in \lambda(G) 
\rangle$. As $[(g,a),(h,b)] = (1, \hat{\delta}(g,h))$ by Lemma~\ref{comms} 
and $\hat{\delta}(g,h) = \ol{\delta}(g \wedge h)$ by definition of 
$\ol{\delta}$, it follows that $\ol{\delta} = \kappa(\delta)$ is surjective.

Now suppose that $\kappa(\delta)$ is surjective. Using the same
calculation as in the first part of the proof, it follows that $\ol{A} 
= \lambda_{c+1}(E)$. As $A$ is a trivial $G$-module, this implies that 
$\lambda_{c+2}(E)$ is trivial and thus $E$ is nilpotent. Hence $E$ is a 
lower central series extension of $A$ by $G$.
\end{proof}

The following theorem exhibits an explicit construction for the isomorphism 
types of lower central series extensions of $A$ by $G$. Recall that $Comp(G,A) 
= Aut(G) \times Aut(A)$, as $G$ acts trivially on $A$.

\begin{theorem}
\label{lcsconst}
Let $G$ be a non-trivial nilpotent group and $A$ a non-trivial group with 
trivial $G$-module structure.
Define
\begin{equation}
\label{eq_Delta}
 \Delta = \left\{ \gamma \in H^2(G,A) \mid \ol{\kappa}(\gamma) 
  \mbox{ is surjective} \right\}.
\end{equation}
\begin{items}
\item[\rm (a)]
Then $\Delta$ is invariant under the action of $Comp(G,A)$; denote by
$\Omega$ a set of orbit representatives of the action of $Comp(G,A)$
on $\Delta$.
\item[\rm (b)]
The extensions of $A$ by $G$ defined by the elements in $\Omega$ form a
complete and irredundant set of isomorphism type representatives of lower 
central series extensions of $A$ by $G$.
\item[\rm (c)]
For $\delta$ with $[\delta] \in \Delta$ denote the
corresponding extension by $E$. Then $Aut(E) = Aut_A(E)$ and there
exists a short exact sequence 
\[ Z^1(G,A) \emb Aut(E) \pro Stab_{Comp(G,A)}([\delta]).\]
\end{items}
\end{theorem}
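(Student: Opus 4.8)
The plan is to prove the three parts of Theorem~\ref{lcsconst} in order, leaning on Theorems~\ref{sisom}, \ref{charlce} and Remark~\ref{wedgeact}.

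For part~(a), I would first observe that $\lambda(G)$ is a characteristic central subgroup of $G$, so Remark~\ref{wedgeact} applies and gives a well-defined action of $Comp(G,A)$ on $\Hom(M(G,\lambda(G)),A)$ compatible with $\ol{\kappa}$; that is, $\ol{\kappa}(\gamma^{(\eta,\nu)}) = \ol{\kappa}(\gamma)^{(\eta,\nu)}$. Since the $Comp(G,A)$-action on $\Hom(M(G,\lambda(G)),A)$ is by precomposition with an automorphism of $M(G,\lambda(G))$ (induced by $\eta$) and postcomposition with $\nu \in Aut(A)$, surjectivity of a homomorphism is preserved under the action. Hence $\gamma \in \Delta$ iff $\gamma^{(\eta,\nu)} \in \Delta$, so $\Delta$ is $Comp(G,A)$-invariant.

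For part~(b), completeness: every lower central series extension of $A$ by $G$ is isomorphic to one obtained from some $\delta \in Z^2(G,A)$, and by Theorem~\ref{charlce} such an extension is a lower central series extension precisely when $\kappa(\delta)$ is surjective, i.e.\ $[\delta] \in \Delta$. Thus each lower central series extension is strongly isomorphic---hence isomorphic---to one defined by an element of $\Omega$. Irredundancy: if $[\delta_1],[\delta_2] \in \Omega$ give isomorphic extensions, I need them to lie in the same $Comp(G,A)$-orbit, forcing $[\delta_1]=[\delta_2]$. The subtlety, and what I expect to be the main obstacle, is that an abstract isomorphism $E_1 \ra E_2$ need not send $\ol{A}$ to $\ol{A}$, so Theorem~\ref{sisom}(1) does not apply directly. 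This is resolved by the observation that $\ol{A} = \lambda_{c+1}(E_i)$ is the \emph{smallest non-trivial term of the lower central series}, hence a characteristic subgroup of $E_i$; therefore any isomorphism $E_1 \ra E_2$ automatically maps $\lambda_{c+1}(E_1) = \ol{A}$ onto $\lambda_{c+1}(E_2) = \ol{A}$, so every isomorphism is a strong isomorphism. Now Theorem~\ref{sisom}(1) gives $(\eta,\nu) \in Comp(G,A)$ with $[\delta_1]^{(\eta,\nu)} = [\delta_2]$, so $[\delta_1]$ and $[\delta_2]$ are in the same orbit and thus equal as elements of $\Omega$.

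For part~(c), the same characteristicity observation shows $Aut(E) = Aut_A(E)$: any $\alpha \in Aut(E)$ fixes the characteristic subgroup $\ol{A} = \lambda_{c+1}(E)$ setwise, so $\ol{A}^\alpha = \ol{A}$, i.e.\ $\alpha \in Aut_A(E)$. The short exact sequence then follows immediately from Theorem~\ref{sisom}(2): the homomorphism $\varphi : Aut_A(E) \ra Aut(G)\times Aut(A)$ has kernel isomorphic to $Z^1(G,A)$ and image $Stab_{Comp(G,A)}([\delta])$, giving $Z^1(G,A) \emb Aut(E) \pro Stab_{Comp(G,A)}([\delta])$. The only real work is the characteristic-subgroup argument that underpins both the irredundancy in~(b) and the identity $Aut(E)=Aut_A(E)$ in~(c); everything else is bookkeeping with results already established.
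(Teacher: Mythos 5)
Your proposal is correct and follows essentially the same route as the paper: the paper's (very terse) proof likewise reduces (b) and (c) to Theorem~\ref{sisom} via the observation that $\ol{A}$ is the smallest non-trivial lower central series term and hence characteristic, so isomorphisms of lower central series extensions are automatically strong isomorphisms, and (a) to the characterisation in Theorem~\ref{charlce} together with the compatibility of the $Comp(G,A)$-action. Your write-up simply makes explicit the details the authors leave implicit.
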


\begin{proof}
Part~(a) follows from Theorem \ref{charlce}. Parts~(b) and~(c) follow 
from Theorem~\ref{sisom}, as lower central series extensions are isomorphic
if and only if they are strongly isomorphic.
\end{proof}

The following remark recalls the structure of $M(G, \lambda(G))$ and
thus gives further insight into the range $Hom(M(G, \lambda(G)), A)$
of $\ol{\kappa}$.

\begin{remark} \label{pairclass}
{\rm (See Prop. 3.2 of \cite{Ell98b})}
Let $G$ be a non-trivial nilpotent group.
\begin{items}
\item[\rm (a)]
If $G$ has class $1$, then $\lambda(G) = G$ and $M(G,\lambda(G)) = M(G) 
\cong G \wedge G$, the abelian exterior product.
\item[\rm (b)]
If $G$ has class at least $2$, then $\lambda(G) \leq G'$
and $M(G,\lambda(G)) \cong G/G' \otimes \lambda(G)$, the abelian tensor 
product.
\end{items}
\end{remark}

\section{Derived series extensions}
\label{ds}

In this section we describe a construction for a set of isomorphism type 
representatives of derived series extensions of $A$ by $G$. Any such
extension is solvable and if one exists, then $G$ is solvable and $A$
is a non-trivial abelian group and a $G$-module. We assume this
throughout the section and further suppose that $G$ is non-trivial to
obtain proper extensions of $A$. Let $\gamma(G)$ denote the smallest
non-trivial subgroup of the derived series of $G$ and set $U = [A,
\gamma(G)]$. We consider the sequence

\begin{equation}
\label{eq_seq}
 Z^2(G, A) \stackrel{\sigma}{\ra} Z^2(\gamma(G), A/U)
             \stackrel{\kappa}{\ra} \Hom( M(\gamma(G)), A/U), 
\end{equation}
where $\sigma(\delta) : \gamma(G) \times \gamma(G) \ra A/U : (g,h) \ms 
\delta(g,h) + U$. Regarding $\kappa$ note that
$\lambda(\gamma(G))=\gamma(G)$ and thus
$M(\gamma(G),\lambda(\gamma(G))) = M(\gamma(G))$. As $\sigma$ maps
$B^2(G,A)$ into $B^2(\gamma(G), A/U)$ equation \eqref{eq_seq} induces
the sequence of maps
\begin{equation}
\label{eq_Hseq}
 H^2(G, A) \stackrel{\ol{\sigma}}{\ra} H^2( \gamma(G), A/U)
             \stackrel{\ol{\kappa}}{\ra} \Hom(M(\gamma(G)), A/U),
\end{equation}
The following theorem characterises the derived series extensions of $G$ 
by $A$.

\begin{theorem}
\label{charder}
Let $G$ be a non-trivial solvable group, $A$ a non-trivial abelian
group with a $G$-module structure and $\delta \in Z^2(G, A)$. 
Then the extension of $A$ by $G$ via $\delta$ is a derived series
extension if and only if $\kappa(\sigma(\delta))$ is surjective.
\end{theorem}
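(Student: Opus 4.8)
The plan is to mimic the structure of the proof of Theorem~\ref{charlce}, but now working modulo $U = [A, \gamma(G)]$ so that the relevant quotient becomes a central extension to which Lemma~\ref{comms} applies. Let $E$ denote the extension of $A$ by $G$ via $\delta$, let $\ol{A}$ be the image of $A$ in $E$, and let $\ol{U}$ be the image of $U$. The first observation I would record is that $\ol{U} = [\ol{A}, E]$ (using $[A,\gamma(G)] = [A, G]$ here since $A$ is abelian and $G/\gamma(G)$ acts, so that $[A, g] = 0$ for $g \notin$ support of the action — more precisely, since $U = [A,\gamma(G)]$ and the derived series of $E$ below $\ol{A}$ will live inside a complement-type argument). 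Then $\ol{A}/\ol{U}$ is a central section of $E/\ol{U}$, and the preimage in $E$ of $\gamma(G)$, call it $D$, is an extension of $A$ by $\gamma(G)$ whose associated cocycle is precisely $\sigma(\delta)$ after passing to $A/U$; i.e. $D/\ol{U}$ is the central extension of $A/U$ by $\gamma(G)$ defined by $\sigma(\delta)$.

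Next I would identify the smallest non-trivial term of the derived series. Write the derived series of $E$ as $E = E^{(0)} \geq E^{(1)} \geq \cdots$. The key claim is that, since $E/\ol{A} \cong G$ has derived series terminating at $\gamma(G)$, we have $E^{(k)} \ol{A}/\ol{A} = \gamma(G)$ for the appropriate $k$, hence $E^{(k)} \ol{A} = D$ and $E^{(k+1)} = [E^{(k)}, E^{(k)}] \leq D^{(1)} = [D,D]$; and one checks $[D,D]$ projects onto $\gamma(G)' = 1$, so $[D,D] \leq \ol{A}$. Thus $E$ is a derived series extension if and only if $E^{(k+1)} = \ol{A}$, which (because $E^{(k+1)} \leq [D,D] \leq \ol{A}$ and $\ol{A}$ is abelian, forcing $E^{(k+2)}=1$) reduces to the single equation $[D,D] = \ol{A}$, equivalently $[D,D]$ maps onto $\ol{A}/\ol{U}$ while also containing $\ol{U}=[\ol{A},E]\le[D,D]$. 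So the whole statement collapses to: $\ol{A}/\ol{U} = [D/\ol{U}, D/\ol{U}]$, i.e. the derived subgroup of the central extension of $A/U$ by $\gamma(G)$ defined by $\sigma(\delta)$ realises all of $A/U$.

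At this point I apply the class-one case of the lower central series analysis: for the central extension $D/\ol{U}$ of $A/U$ by $\gamma(G)$, Lemma~\ref{comms} gives $[(g,\bar a),(h,\bar b)] = ([g,h], \widehat{\sigma(\delta)}(g,h))$, and $\lambda(\gamma(G)) = \gamma(G)$ so $M(\gamma(G),\lambda(\gamma(G))) = M(\gamma(G))$. Exactly as in Theorem~\ref{charlce}, the subgroup generated by these commutators equals the image of $\ol{\sigma(\delta)} = \kappa(\sigma(\delta)) \in \Hom(M(\gamma(G)), A/U)$, so $[D/\ol{U}, D/\ol{U}] = \ol{A}/\ol{U}$ holds if and only if $\kappa(\sigma(\delta))$ is surjective. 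Combining with the reduction of the previous paragraph yields the theorem.

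The main obstacle I anticipate is the bookkeeping in the second paragraph: making rigorous that the derived series of $E$ first lands in $D$ at the correct step and that $E^{(k+1)} \leq [D,D] \leq \ol{A}$, so that surjectivity of $\kappa(\sigma(\delta))$ is not merely necessary but also sufficient (the sufficiency direction needs that once $[D,D] = \ol{A}$, no further proper descent occurs — this uses that $\ol{A}$ is abelian and, crucially, that $[\ol A, E] = \ol U \subseteq \ol A$, so $E^{(k+2)} = [\ol A, \ol A] = 1$). One must also be slightly careful that $U = [A,\gamma(G)]$ is a genuine $G$-submodule of $A$ so that $A/U$ carries a well-defined $\gamma(G)$-module structure (indeed trivial, by construction of $U$), which is what licenses treating $D/\ol U$ as a \emph{central} extension and invoking Lemma~\ref{comms}.
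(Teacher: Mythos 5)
Your overall strategy is the same as the paper's: pass to the preimage $D$ of $\gamma(G)$ in $E$, quotient by $\ol{U}$ to obtain a central extension of $A/U$ by $\gamma(G)$ with cocycle $\sigma(\delta)$, and invoke the lower central series criterion for the abelian group $\gamma(G)$. Two points need repair, however. First, the identity you record at the outset, $\ol{U}=[\ol{A},E]$ (equivalently $[A,\gamma(G)]=[A,G]$), is false in general: take $G=S_3$ acting on $A=C_3$ through the sign character followed by inversion; then $[A,\gamma(G)]=[A,A_3]=0$ while $[A,G]=A$. What you actually need is $\ol{U}=[\ol{A},D]$, which holds because the conjugation action of $D$ on the abelian group $\ol{A}$ factors through $D/\ol{A}\cong\gamma(G)$; this gives the containment $\ol{U}\leq[D,D]$ correctly, whereas your chain ``$\ol U=[\ol{A},E]\leq[D,D]$'' would fail even if the identity were true, since $E\not\leq D$.

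Second, the step you yourself flag as the main obstacle is a genuine gap as written: the sandwich $E^{(k+1)}\leq[D,D]\leq\ol{A}$ yields $E^{(k+1)}=\ol{A}\Rightarrow[D,D]=\ol{A}$ but not the converse, and the converse is exactly what makes surjectivity of $\kappa(\sigma(\delta))$ \emph{sufficient}. (The paper elides this too, asserting $M'=A$ ``by definition''.) The gap does close: since $E^{(k)}\trianglelefteq E$, every commutator $[a,x]$ with $a\in\ol{A}$ and $x\in E^{(k)}$ lies in $E^{(k)}$, so $\ol{U}=[\ol{A},E^{(k)}]\leq E^{(k)}$ and hence $[\ol{U},E^{(k)}]\leq E^{(k+1)}$. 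Now $[D,D]=[E^{(k)}\ol{A},E^{(k)}\ol{A}]=E^{(k+1)}\ol{U}$, so if $[D,D]=\ol{A}$ then $\ol{U}=[\ol{A},E^{(k)}]=[E^{(k+1)},E^{(k)}]\,[\ol{U},E^{(k)}]\leq E^{(k+1)}$, whence $\ol{A}=E^{(k+1)}\ol{U}=E^{(k+1)}$. With these two repairs your argument goes through and coincides with the paper's proof.
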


\begin{proof}
Let $E$ denote the extension of $A$ by $G$ via $\delta$. Consider $E
\ra G : (g,a) \ms g$, the natural epimorphism from $E$ onto $G$ and
denote by $M$ the full preimage of $\gamma(G)$ in $E$. By definition
$E$ is a derived series extension if $M' = A$. As $U =
[A, M] \leq M'$, this is equivalent to the condition
$(M/U)' = A/U$. The induced action of $\gamma(G)$ on $A/U$ is
trivial. Thus $M/U$ has class at most $2$ and $\lambda(M/U) =
(M/U)'$. Hence $E$ is a derived series extension if and only if $M/U$
is a lower central series extension of $\gamma(G)$ by $A/U$ via the cocycle
$\sigma(\delta)$. The result now follows from Theorem~\ref{charlce}.
\end{proof}

This allows the following explicit construction for the isomorphism types of
derived series extensions of $A$ by $G$.

\begin{theorem}
\label{derconst}
Let $G$ be a non-trivial solvable group and $A$ a non-trivial group with
an arbitrary $G$-module structure. Define 
\begin{equation}
\label{eq_Delta_der}
\Delta = \left\{ \gamma \in H^2(G,A) \mid \ol{\kappa}(\ol{\sigma}(\gamma)) 
\mbox{ is surjective}\right\}.
\end{equation}
\begin{items}
\item[\rm (a)]
Then $\Delta$ is invariant under the action of $Comp(G,A)$; let $\Omega$
denote a set of orbit representatives of the action of $Comp(G,A)$ on 
$\Delta$.
\item[\rm (b)]
The extensions of $A$ by $G$ defined by the elements in $\Omega$ form a 
complete and irredundant set of isomorphism type representatives of 
derived series extensions of $A$ by $G$.
\item[\rm (c)]
For $\delta$ with $[\delta] \in \Delta$ denote the
corresponding extension by $E$. Then $Aut(E) = Aut_A(E)$ and there
exists a short exact sequence 
\[
Z^1(G,A) \emb Aut(E) \pro Stab_{Comp(G,A)}([\delta]).
\]
\end{items}
\end{theorem}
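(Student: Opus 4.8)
The plan is to imitate the structure of the proof of Theorem~\ref{lcsconst}, reducing everything to the characterisation in Theorem~\ref{charder} together with Robinson's theorem (Theorem~\ref{sisom}). First I would establish part~(a): the set $\Delta$ is invariant under $Comp(G,A)$. The key point is that both $\ol{\sigma}$ and $\ol{\kappa}$ are compatible with the relevant induced actions. For $\ol{\kappa}$ this is Remark~\ref{wedgeact}, provided $\gamma(G)$ is a characteristic subgroup of $G$ (it is, being a term of the derived series) and provided the action of $\gamma(G)$ on $A/U$ is trivial (it is, since $U = [A,\gamma(G)]$, as noted in the proof of Theorem~\ref{charder}). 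For $\ol{\sigma}$ one checks directly that an automorphism $\eta \in Aut(G)$ restricts to an automorphism of $\gamma(G)$ and that $\nu \in Aut(A)$ induces an automorphism of $A/U$ (since $U$ is characteristic in $A$ as a $Comp(G,A)$-stable submodule), and that these are compatible with $\sigma$ in the obvious way; hence $[\delta]^{(\eta,\nu)}$ lands in $\Delta$ whenever $[\delta]$ does. So $Comp(G,A)$ acts on $\Delta$ and a set $\Omega$ of orbit representatives exists.

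Next I would prove part~(b). By Theorem~\ref{charder}, the extensions defined by elements of $\Delta$ are exactly the derived series extensions of $A$ by $G$ (up to equivalence $\Delta$ parametrises strong-isomorphism classes via the $B^2$-quotient already built into $H^2$). By Theorem~\ref{sisom}(1), two such extensions $E_1, E_2$ coming from $\delta_1,\delta_2$ are strongly isomorphic if and only if $[\delta_1]$ and $[\delta_2]$ lie in the same $Comp(G,A)$-orbit. The one extra ingredient needed — exactly as in the lower central series case — is that for derived series extensions, \emph{isomorphic implies strongly isomorphic}: any isomorphism $E_1 \ra E_2$ must send $\ol{A}$ to $\ol{A}$ because $\ol{A}$ is the smallest non-trivial term of the derived series of $E_i$, which is a characteristic subgroup. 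Hence abstract isomorphism classes of derived series extensions correspond bijectively to $Comp(G,A)$-orbits on $\Delta$, so $\Omega$ yields a complete and irredundant set of representatives.

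Finally, part~(c) follows the same way: since $\ol{A}$ is characteristic in $E$, every automorphism of $E$ stabilises $\ol{A}$, so $Aut(E) = Aut_A(E)$; then Theorem~\ref{sisom}(2) applied to $[\delta]$ gives the homomorphism $\varphi : Aut_A(E) \ra Aut(G)\times Aut(A)$ with $\ker(\varphi) \cong Z^1(G,A)$ and image $Stab_{Comp(G,A)}([\delta])$, which is precisely the claimed short exact sequence $Z^1(G,A) \emb Aut(E) \pro Stab_{Comp(G,A)}([\delta])$.

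The routine calculations are the equivariance checks for $\ol{\sigma}$ and $\ol{\kappa}$ in part~(a); the only genuinely substantive point is the observation that $\ol{A}$ is characteristic in a derived series extension, which is what reduces the isomorphism problem to the strong isomorphism problem and makes Robinson's theorem directly applicable. I expect the main obstacle to be a clean bookkeeping of the two induced actions (of $Comp(G,A)$ on $H^2(\gamma(G),A/U)$ and on $\Hom(M(\gamma(G)),A/U)$) and verifying their compatibility with $\ol{\sigma}$; everything else is a direct transcription of the lower central series argument with $G$ replaced by $\gamma(G)$ and $A$ by $A/U$.
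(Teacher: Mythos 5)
Your proposal is correct and follows essentially the same route as the paper, whose proof of this theorem is simply a reference to the argument for Theorem~\ref{lcsconst}: the characterisation from Theorem~\ref{charder} identifies $\Delta$ with the classes of derived series extensions, and Robinson's Theorem~\ref{sisom} handles (b) and (c) once one observes, as you do, that $\ol{A}$ is characteristic in a derived series extension, so isomorphism coincides with strong isomorphism. Your explicit equivariance check for $\ol{\sigma}$ and $\ol{\kappa}$ in part~(a) is a fine (if slightly longer) substitute for the paper's implicit argument that membership in $\Delta$ is a strong-isomorphism invariant and hence constant on $Comp(G,A)$-orbits.
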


\begin{proof}
The proof is similar to that of Theorem~\ref{lcsconst}.
\end{proof}

\section{Computational methods}
\label{sec_comp}

In the previous sections we established criteria to decide whether an 
extension is either a lower central series extension or a derived
series extension. Here we exploit these descriptions to obtain
effective algorithms to construct those extensions. We have
implemented the algorithms in GAP~\cite{GAP4} for the special case
that the module is elementary abelian. The implementation is available
in the GAP package SpecialExt~\cite{SpecialExt}.

\subsection{Computing $\mathbf{H^2(G,A)}$ and the action of
  $\mathbf{Comp(G,A)}$}
\label{compute}

If $G$ is a polycyclic group defined by a consistent polycyclic presentation
and $A$ is a finitely generated abelian group and a $G$-module, then 
$H^2(G,A)$ can be computed effectively. We recall the basic ideas of the 
underlying algorithm here briefly for the case that $G$ is finite; see also 
\cite[Section 8.7.2]{HEO05}.

Let $\g = \{g_1, \ldots, g_n\}$ be a polycyclic generating sequence of $G$
and let $\{a_1, \ldots, a_s\}$ be a generating set of $A$. Then there exists 
a (unique) consistent polycyclic presentation of $G$ on the generators
$\g$. This has relations of the form
\[ g_i^{r_i} = g_{i+1}^{e_{i,i+1}} \cdots g_n^{e_{i,n}}
   \mbox{ for } 1 \leq i \leq n, \mbox{ and } \]
\[ g_i^{g_j} = g_{j+1}^{e_{i,j,j+1}} \cdots g_n^{e_{i,j,n}}
   \mbox{ for } 1 \leq j < i \leq n \]
for certain $r_i \in \N$ and $e_{i,j}, e_{i,j,k} \in \Z$. 

Let $l=(n+1)n/2$.
For an extension $E$ of $A$ by $G$ via $\delta \in Z^2(G,A)$ we
define the tuple $t_{\delta}=(t_{i,j}\mid 1\leq j\leq i\leq n) \in A^l$ via 
\[
(1, t_{i,i}) = (g_n,0)^{-e_{i,n}}\cdots (g_{i+1},0)^{-e_{i,i+1}}(g_i,0)^{r_i}
 \mbox{ for } 1 \leq i \leq n, \mbox{ and }
\]
\[
(1, t_{i,j}) = (g_n,0)^{-e_{i,j,n}}\cdots (g_{j+1},0)^{-e_{i,j,j+1}}(g_i,0)^{(g_j,0)}
  \mbox{ for } 1 \leq j < i \leq n.
\]
By \cite[Lemma 8.47]{HEO05} the map
\[ \varphi : Z^2(G,A) \ra A^l : \delta \ms t_{\delta} \]
is a group homomorphism with $\ker(\varphi) \leq B^2(G,A)$. Denote
$Z = \im(\varphi) \leq A^l$ and $B = B^2(G,A)^\varphi \leq Z$. 
Then we obtain an isomorphism
\[ \ol{\varphi} : H^2(G,A) \ra Z/B. \]

For $t\in A^l$ we define a presentation $P(t)$ with generators $g_1,
\ldots, g_n, a_1, \ldots, a_s$ and two types of relations: firstly
relations defining $A$ as group and $G$-module and secondly 
 \[ g_i^{r_i} = g_{i+1}^{e_{i,i+1}} \cdots g_n^{e_{i,n}} \cdot t_{i,i}
    \mbox{ for } 1 \leq i \leq n, \]
 \[ g_i^{g_j} = g_{j+1}^{e_{i,j,j+1}} \cdots g_n^{e_{i,j,n}} \cdot t_{i,j}
    \mbox{ for } 1 \leq j < i \leq n.\]
If $t\in Z$, then the presentation $P(t)$ 
defines a group that is an extension of $A$ by $G$ via a cocycle
$\delta$ in the preimage of $t$ under $\varphi$.

The subgroups $Z$ and $B$ of $A^l$ can be computed effectively if $A$
is elementary-abelian (see~\cite[Section 8.7.2]{HEO05}) and we
use $Z/B$ to represent $H^2(G,A)$ in our applications. 
The group $Comp(G,A)$ can be computed via its definition
The action of $Comp(G,A)$ on $H^2(G,A)$
translates to an action of $Comp(G,A)$ on $Z/B$. A pair
$(\eta,\nu)\in Comp(G,A) \leq Aut(G)\times Aut(A)$ acts on $t\in
Z$ via
\[
(1,t_{i,i})^{(\eta,\nu)} =
\left((g_n^{\eta^{-1}},0)^{-e_{i,n}}\cdots
  (g_{i+1}^{\eta^{-1}},0)^{-e_{i,i+1}}(g_i^{\eta^{-1}},0)^{r_i}\right)^{\nu}
 \mbox{ for } 1 \leq i \leq n, \mbox{ and }
\]
\[
(1, t_{i,j})^{(\eta,\nu)} = \left(
(g_n^{\eta^{-1}},0)^{-e_{i,j,n}}\cdots (g_{j+1}^{\eta^{-1}},0)^{-e_{i,j,j+1}}(g_i^{\eta^{-1}},0)^{(g_j^{\eta^{-1}},0)}\right)^{\nu}
  \mbox{ for } 1 \leq j < i \leq n.
\]

\subsection{Lower central series extensions}
\label{complcs}

Let $G$ be a finite non-trivial nilpotent group and $A$ a finite
non-trivial abelian group with a trivial $G$-module structure. Our
aim is to compute a complete and irredundant set of isomorphism type
representatives of lower central series extensions of $A$ by $G$.

We choose a polycyclic generating sequence $\g = \{ g_1, \ldots, g_n \}$
such that it refines the lower central series of $G$. As $G'=[G,G]$
and $\lambda(G)$ are
subgroups in the lower central series of $G$, it then follows that there
exist indices $d$ and $m$ in $\{1, \ldots, n\}$ with $G' = \langle g_{d+1}, 
\ldots, g_n \rangle$  and $\lambda(G) = \langle g_m, \ldots, g_n \rangle$. Let 
$J = \{ (i,j) \mid 1 \leq j\leq i \leq n \}$ and $I = \{ (i,j) \mid 1 \leq j 
\leq d, m \leq i \leq n, j<i\} \subseteq J$. Then $|J| = l$ and we denote
$|I| = h$. We further denote the natural projection corresponding to
$I$ and $J$ by
\[ \pi : A^l \ra A^h 
       : (t_{i,j} \mid (i,j) \in J) \ms (t_{i,j} \mid (i,j) \in I).\]

By Theorem~\ref{lcsconst} we have achieved our aim if we find
$\varphi(\Omega)$ for a set $\Omega$ as defined in
Theorem~\ref{lcsconst}. Recall that $\Omega$ is a set of orbit
representatives in the set $\Delta$ from \eqref{eq_Delta}. The
following lemma provides a criterion to check whether a given $\delta
\in Z^2(G,A)$ satisfies $[\delta] \in \Delta$. We say that a tuple
$t \in A^k$ for $k\in \N$ is \emph{full} if the entries in $t$ generate $A$.

\begin{lemma}
\label{full}
Let $G$ be a non-trivial nilpotent group, $A$ a non-trivial abelian
group with  trivial $G$-module structure and $\delta \in Z^2(G, A)$. 
Then $\kappa(\delta)$ is surjective if and
only if $\pi(t_{\delta})$ is full.
\end{lemma}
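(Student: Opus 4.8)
The plan is to identify explicitly which commutators the entries of $\pi(t_\delta)$ compute, and to match this against the image of $\kappa(\delta) = \ol\delta$. Recall from Lemma~\ref{comms} that in the extension $E$ one has $[(g,0),(h,0)] = (1,\hat\delta(g,h))$, and from the proof of Lemma~\ref{wedgehom} that $\ol\delta(g\wedge h) = \hat\delta(g,h)$. So the image of $\kappa(\delta)$ is the subgroup of $A$ generated by $\{\hat\delta(g,h) \mid g \in G, h \in \lambda(G)\}$, equivalently by $\{\hat\delta(g_i,g_j)\}$ over a generating set. The first step is therefore to argue that, because $M(G,\lambda(G))$ is generated by the symbols $g_i \wedge g_j$ with $g_i \in G$ and $g_j \in \lambda(G)$, it suffices to let $g_i$ range over the polycyclic generators $g_1,\dots,g_n$ and $g_j$ over the generators $g_m,\dots,g_n$ of $\lambda(G)$; in fact one can further restrict $g_j$ to a transversal of $G'$ by bilinearity together with Remark~\ref{pairclass}(b), since when $G$ has class $\geq 2$ the relevant wedges factor through $G/G' \otimes \lambda(G)$ (the class $1$ case, where $d=0$, must be handled separately but is the same argument with $\lambda(G)=G$). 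This explains the precise index set $I = \{(i,j) \mid 1 \leq j \leq d,\ m \leq i \leq n,\ j < i\}$: these are exactly the pairs needed to generate the image.

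The second step is to show that the entry $t_{i,j}$ of $t_\delta$, for $(i,j)$ with $j < i$, equals $\hat\delta(g_i,g_j)$ up to the identification $(1,a) \leftrightarrow a$, or at least that the collection $(t_{i,j} \mid (i,j)\in I)$ generates the same subgroup of $A$ as $(\hat\delta(g_i,g_j) \mid (i,j)\in I)$. By definition $(1,t_{i,j}) = (g_n,0)^{-e_{i,j,n}}\cdots(g_{j+1},0)^{-e_{i,j,j+1}}(g_i,0)^{(g_j,0)}$, and $(g_i,0)^{(g_j,0)} = (g_j,0)^{-1}(g_i,0)(g_j,0)$, which differs from $(g_i,0)$ by the commutator $[(g_j,0),(g_i,0)] = (1,\hat\delta(g_j,g_i))$; carrying the remaining factors $g_{j+1}^{e_{i,j,j+1}}\cdots g_n^{e_{i,j,n}}$ (which by the polycyclic presentation equal $g_i^{g_j}$ in $G$, hence $g_i^{g_j} \in \lambda(G)$ since $j \le d$ forces $[g_i,g_j]\in G'$, and more to the point $i \ge m$) through the computation shows that $t_{i,j}$ records exactly the correction term, which is $\hat\delta(g_j,g_i) = -\hat\delta(g_i,g_j)$ modulo lower-order contributions that themselves lie in the span of other $\hat\delta(g_k,g_\ell)$ with $(k,\ell)\in I$. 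The power relations $t_{i,i}$ play no role here because $\hat\delta(g,g) = 0$, which is why $I$ omits the diagonal.

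Putting the two steps together: $\pi(t_\delta)$ full $\iff$ the entries $t_{i,j}$, $(i,j)\in I$, generate $A$ $\iff$ the elements $\hat\delta(g_i,g_j)$, $(i,j)\in I$, generate $A$ $\iff$ the image of $\kappa(\delta)$ is all of $A$ $\iff$ $\kappa(\delta)$ is surjective. The main obstacle I expect is the bookkeeping in the second step: one must be careful that the ``lower-order contributions'' from collecting the tail $g_{j+1}^{e_{i,j,j+1}}\cdots$ into normal form genuinely stay inside the subgroup generated by the designated $\hat\delta$-values, rather than accidentally needing wedges $g_i\wedge g_j$ outside the index set $I$ — this is where one uses that refining the polycyclic series along the lower central series makes the structure constants $e_{i,j,k}$ vanish for $k$ too small, so that $g_i^{g_j}$ is congruent to $g_i$ modulo $\langle g_{m'},\dots,g_n\rangle$ for a suitable $m'$, keeping everything within $\lambda(G)$ in the second coordinate. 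An alternative, cleaner route that sidesteps some of this is to invoke Theorem~\ref{charlce} directly: $\kappa(\delta)$ surjective $\iff$ $\ol A = \lambda_{c+1}(E)$, and then observe that $\lambda_{c+1}(E)$ is generated by the images of the extra generators $t_{i,j}$ appearing in the presentation $P(t_\delta)$ relative to the presentation of $G$, since these are precisely the commutator corrections forced into $A$ — but making ``precisely'' precise still amounts to the same index-chasing, so I would present the direct argument above.
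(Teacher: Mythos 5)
Your argument is correct and follows essentially the same route as the paper's proof: reduce surjectivity of $\kappa(\delta)$ to generation of $A$ by the values $\hat{\delta}(g_j,g_i)$ with $(i,j)\in I$, using bilinearity of $\hat{\delta}$ and Remark~\ref{pairclass}, and then identify these values with the entries of $\pi(t_\delta)$. Two small points: since $g_i\in\lambda(G)\leq Z(G)$ for $i\geq m$, the conjugation relation $g_i^{g_j}=g_i$ has trivial tail, so $t_{i,j}$ equals $\hat{\delta}(g_i,g_j)$ exactly and the ``lower-order contributions'' you worry about vanish; and in the class-$1$ case one has $d=n$ rather than $d=0$, since $G'=\langle g_{d+1},\ldots,g_n\rangle$ is then trivial.
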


\begin{proof}
Recall that $\kappa(\delta) : M(G,\lambda(G)) \ra A$ with
$\kappa(\delta)(g \wedge h) = \hat{\delta}(g,h) = \delta(g,h) -
\delta(h,g)$. Thus $\kappa(\delta)$ is surjective if and only if $\{
\hat{\delta}(g,h) \mid g \in G, h \in \lambda(G) \} = A$. As
$\lambda(G)$ is central in $G$, the definition of $M(G,\lambda(G))$
and Lemma  
\ref{wedgehom} yield that $\hat{\delta}(g, hh') = \hat{\delta}(g,h) +
\hat{\delta}(g,h')$ and $\hat{\delta}(gg',h) = \hat{\delta}(^gg', h) +
\hat{\delta}(g,h)$. By Remark \ref{pairclass} we note that the first
argument of $\hat{\delta}$ depends on $G/G'$ only. This also yields that
$\hat{\delta}$ is multiplicative in both components. In summary, it
follows that $\kappa(\delta)$ is surjective if and only if 
$\langle \hat{\delta}(g_j, g_i) \mid (i,j) \in I \rangle = A$ and thus
if and only if $\pi(t_{\delta})$ is full.
\end{proof}

Following the approach described in Section~\ref{compute} we first
compute $Z/B$ and then its orbits under the action of
$Comp(G,A)$. We next choose orbit representatives $r_1,\dots, r_m\in
Z/B$ and then a representative $t_i$ in $A^l$ for each $r_i, 1\leq i\leq
m$. According to Theorem~\ref{lcsconst}(a) and Lemma~\ref{full} we
then have $\varphi(\Omega) = \{t_i \mid \pi(t_i) \mbox{ is full}\}$ for
some $\Omega$ as desired and $\{P(t) \mid t\in \varphi(\Omega)\}$ is a
set of presentations for a complete and irredundant list of
lower central series extensions of $A$ by $G$.

To avoid redundant computations it is useful to know {\em a priori} 
when there are no lower central series extensions of $A$ by $G$. The
following remark collects some elementary conditions for this purpose.

\begin{remark}
There are no lower central series  extensions of $A$ by $G$ if either
$h < d(A)$, where $d(A)$ is the minimal generator number of $A$, or
$\pi(\im(\varphi)) \leq M^h$ for some proper subgroup $M$ of $A$.
\end{remark}

A computationally more involved criterium enables us to always detect
when no lower central series extension exists. Denote the maximal
subgroups of $A$ by $M_1, \ldots, M_s$ and let $Z_i := \{ t \in Z \mid 
\pi(t) \in M_i^h \}$. Then 
\[ \varphi(\Delta) = Z \setminus \cup_{i=1}^s Z_i. \]
We use the Inclusion-Exclusion Principle to determine the cardinality
of $\varphi(\Delta)$ via
\begin{eqnarray*}
|\varphi(\Delta)| 
  &=& |Z| - | \cup_{i = 1}^s Z_i | \\ 
  &=& |Z| - \left(\sum_{k=1}^s (-1)^{k+1} 
    (\sum_{1 \leq i_1 < \ldots < i_k \leq s} 
     |Z_{i_1} \cap \ldots \cap Z_{i_k}|)\right)
\end{eqnarray*}
Each intersection $Z_{i_1} \cap \ldots \cap Z_{i_k}$ can
be computed readily from $M_{i_1},\dots,M_{i_k}$ and $Z$ by solving
a system of linear equations. In the special case that $A$ is
elementary abelian this approach simplifies as the automorphism group
acts as full symmetric group on the maximal subgroups and only one
intersection of every type has to be determined.

\subsection{Derived series extensions}
\label{compds}

Let $G$ be a finite non-trivial solvable group and $A$ a finite
non-trivial abelian group with a $G$-module structure. Our aim is to
compute a complete and irredundant set of isomorphism type
representatives of derived series extensions of $A$ by $G$.

We choose a polycyclic generating sequence $\g = \{ g_1, \ldots, g_n \}$
such that it refines the derived series of $G$. As $\gamma(G)$ is a
subgroup of this series, it follows that there exists $m \in \{1, \ldots,
n\}$ with $\gamma(G) = \langle g_m, \ldots, g_n \rangle$. Let $J = \{
(i,j) \mid 1 \leq j \leq i 
\leq n \}$ and $I = \{ (i,j) \mid m \leq j < i \leq n \}$. Then $|J| = l$ 
and we denote $|I| = k$. We denote the 
natural projection corresponding to $I$, $J$ and $A/[A,\gamma(G)]$ by 
\[ \mu : A^l \ra (A/[A,\gamma(G)])^k 
       : (t_{i,j} \mid (i,j) \in J) \ms (t_{i,j} + [A,\gamma(G)] \mid (i,j) \in I).\]

By Theorem~\ref{derconst} we have achieved our aim if we find
$\varphi(\Omega)$ for a set $\Omega$ as defined in
Theorem~\ref{derconst}. Recall that $\Omega$ is a set of orbit
representatives in the set $\Delta$ from \eqref{eq_Delta_der}. The
following lemma provides a criterion to check whether a given $\delta
\in Z^2(G,A)$ satisfies $[\delta] \in \Delta$. 

\begin{lemma}
\label{full2}
Let $G$ be a non-trivial solvable group, $A$ a non-trivial abelian
group with a $G$-module structure and $\delta \in Z^2(G, A)$.
Then $\kappa(\sigma(\delta))$ is surjective if and only if 
$\mu(t_{\delta})$ is full.
\end{lemma}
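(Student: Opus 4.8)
The plan is to reduce Lemma~\ref{full2} to the already-established Lemma~\ref{full}, applied to the group $\gamma(G)$ acting trivially on $A/U$ where $U = [A,\gamma(G)]$. By Theorem~\ref{charder} and its proof, $\sigma(\delta) \in Z^2(\gamma(G), A/U)$ is the cocycle governing the lower central series extension $M/U$ of $A/U$ by $\gamma(G)$, and since $\lambda(\gamma(G)) = \gamma(G)$ the map $\kappa$ appearing here is precisely $\kappa_{\lambda(\gamma(G))} = \kappa_{\gamma(G)}$. So Lemma~\ref{full} tells us that $\kappa(\sigma(\delta))$ is surjective if and only if $\pi'(t_{\sigma(\delta)})$ is full, where $\pi'$ is the projection associated to $\gamma(G)$ and its derived subgroup. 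Since $\gamma(G)$ has trivial image in its own abelianisation-with-respect-to-$U$-action is not quite the right phrasing — rather, $\gamma(G)$ is perfect relative to nothing, but what matters is that in the notation of Section~\ref{complcs} applied to $\gamma(G)$, the role of ``$G'$'' is played by $\gamma(G)' $, and since we only need the commutators $[g_j,g_i]$ with $g_i \in \lambda(\gamma(G)) = \gamma(G)$ and $g_j$ ranging over $\gamma(G)$, the relevant index set is exactly $I = \{(i,j) \mid m \le j < i \le n\}$.

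The key remaining step is therefore bookkeeping: I would verify that the tuple $t_{\sigma(\delta)}$ computed from the polycyclic presentation of $\gamma(G)$ (restricted from that of $G$, which is possible because $\g$ refines the derived series, so $g_m, \dots, g_n$ generate $\gamma(G)$ with induced consistent presentation) has, in the coordinates indexed by $I$, entries $\sigma(\delta)(g_j, g_i) - \sigma(\delta)(g_i, g_j) = \hat{\sigma(\delta)}(g_j,g_i)$ modulo $U$, and that these agree with $\delta(g_j,g_i) - \delta(g_i,g_j) + U$, which are exactly the $I$-coordinates of $\mu(t_\delta)$. More precisely, one unwinds that $\mu(t_\delta)$ full means $\langle \delta(g_j,g_i) - \delta(g_i,g_j) + U \mid (i,j) \in I\rangle = A/U$ — here I need to be slightly careful that the entries of $t_\delta$ in the commutator positions $(i,j)$ are the structure constants, which by Lemma~\ref{comms} (adapted: the action of $G$ on $A$ need not be trivial, but the action of $\gamma(G)$ on $A/U$ is trivial by definition of $U$) encode $\hat{\sigma(\delta)}(g_j,g_i) \bmod U$ up to the usual identification. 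Then applying the chain of reductions in Lemma~\ref{full}'s proof (multiplicativity of $\hat{}$ in both arguments, dependence of the first argument only on $\gamma(G)/\gamma(G)'$) shows this generating condition is equivalent to surjectivity of $\kappa(\sigma(\delta)) = \overline{\sigma(\delta)} : M(\gamma(G)) \to A/U$.

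The main obstacle I anticipate is matching conventions precisely between the two index sets and the two ambient groups: the $I$ defined in Section~\ref{compds} is $\{(i,j) \mid m \le j < i \le n\}$, which corresponds to taking \emph{both} arguments inside $\gamma(G)$, whereas the $I$ in Section~\ref{complcs} allowed the first argument $g_j$ to range over all of $G$ (namely $j \le d$) while the second stayed in $\lambda(G)$. When we apply Lemma~\ref{full} to the group $\gamma(G)$ in place of $G$, its ``$G$'' is $\gamma(G)$ and its ``$\lambda(G)$'' is $\lambda(\gamma(G)) = \gamma(G)$, so the first-argument range becomes all of $\gamma(G)$ too — hence both arguments in $\{g_m,\dots,g_n\}$, matching the $I$ here. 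I would make this identification explicit and then simply cite Lemma~\ref{full}. A secondary technical point is confirming that the $\mu$-coordinates of $t_\delta$ genuinely compute $\sigma(\delta)$ on the restricted presentation — this is immediate since restricting the polycyclic presentation of $G$ to $g_m,\dots,g_n$ gives the presentation used to form $t_{\sigma(\delta)}$, and the defining products for the $(i,j) \in I$ entries only involve $g_m,\dots,g_n$. With these identifications in hand the proof is a one-line appeal: $\kappa(\sigma(\delta))$ is surjective $\iff$ (by Lemma~\ref{full} for $\gamma(G)$ and $A/U$) the relevant sub-tuple of $t_{\sigma(\delta)}$ is full $\iff$ $\mu(t_\delta)$ is full.
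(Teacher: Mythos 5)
Your proposal is correct and follows essentially the same route as the paper: reduce to Lemma~\ref{full} applied to $\gamma(G)$ acting trivially on $A/U$ (using $\lambda(\gamma(G))=\gamma(G)$), and identify $\pi(t_{\sigma(\delta)})$ with $\mu(t_\delta)$ via the restricted polycyclic presentation. The paper states this in three sentences; your extra bookkeeping on the index sets and on why the $(i,j)\in I$ entries of $t_\delta$ reduce mod $U$ to those of $t_{\sigma(\delta)}$ just makes explicit what the paper leaves implicit.
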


\begin{proof}
The induced cocycle $\sigma(\delta)$ defines an extension of
$A/[A,\gamma(G)]$ by $\gamma(G)$. Applying Lemma~\ref{full} yields
that $\kappa(\sigma(\delta))$ is surjective if and only if
$\pi(t_{\sigma(\delta)})$ is full. The latter equals $\mu(t_{\delta})$
which completes the proof.
\end{proof}

Following the approach described in Section \ref{compute} we first
compute $Z/B$ and then its orbits under the action of $Comp(G,A)$. 
 We next choose orbit representatives $r_1,\dots, r_m\in
Z/B$ and then a representative $t_i$ in $A^l$ for each $r_i, 1\leq i\leq
m$. According to Theorem~\ref{derconst}(a) and Lemma~\ref{full2} we
then have $\varphi(\Omega) = \{t_i \mid \mu(t_i) \mbox{ is full}\}$ for
some $\Omega$ as desired and $\{P(t) \mid t\in \varphi(\Omega)\}$ is a
set of presentations for a complete and irredundant list of
derived series extensions of $A$ by $G$.

\section{Application}
\label{sec_dl}

Groups of given derived length and minimal composition length are
known up to derived length 8, see~\cite{Gla05}. Here we use the method from
Section~\ref{compds} to obtain new information on groups of derived
length 10. For our computations we utilised the GAP package
SpecialExt~\cite{SpecialExt} which implements the methods from
Section~\ref{sec_comp} for elementary abelian groups.

The sporadic simple group $Fi_{23}$ has a maximal solvable subgroup $M$ of 
order $2^{11} 3^{13}$. This group has the form 
$M= \operatorname{GL}(2,3) \ltimes 3^{2+1} \ltimes 2^{6+1} \ltimes 3^{8+1}$,
where $p^{r+s}$ is used to denote an $r$-generator group of order $p^{r+s}$ 
and class $2$. The group $M$ has derived length $10$ and composition length
$24$. It is conjectured that $24$ is the minimal possible composition length
for a group of derived length $10$, see \cite{Gla05}. Previously the
group $M$ has been the only group known to achieve this bound.

Let $M = M^{(1)} \geq M^{(2)} \geq \ldots \geq M^{(11)} = \{1\}$ denote
the derived series of $M$. We have used the method from
Section~\ref{compds} to determine a complete and
irredundant set of isomorphism type representatives of derived series 
extensions of $A := M^{(i)}/M^{(i+1)}$ by $G := M/M^{(i)}$ for $2\leq
i \leq 10$ using the
$G$-module structure of $A$ inside $M$. The following four values of
$i$ yield more than one isomorphism type representative.

\begin{description}
\item[$\mathbf{i=4:}$] Here $G\cong S_4$ and $A\cong C_2$ is a trivial
  $G$-module. We obtain two non-isomorphic derived series extensions:
  the groups with the numbers $28$ and $29$ in the SmallGroups
  Library described in~\cite{BEO02}.
\item [$\mathbf{i=6:}$] Here $G\cong \GL_2(3) \ltimes 3^2$ and $A\cong 
  C_3^2$. We obtain three non-isomorphic derived series extensions: the
  groups with the numbers $2889$, $2890$ and $2891$ in the SmallGroups
  Library.
\item [$\mathbf{i=8:}$] Here $G \cong \GL_2(3) \ltimes 3^{2+1}
  \ltimes 2^6$ and $A \cong C_2$ is a trivial $G$-module. We obtain
  two non-isomorphic derived series extensions. 
\item [$\mathbf{i=10:}$] Here $G \cong \GL_2(3) \ltimes 3^{2+1}
  \ltimes 2^{6+1} \ltimes 3^8$ and $A \cong C_3$. We obtain three
  non-isomorphic derived series extensions. 
\end{description}

In particular, the computation for $i$ equal to $10$ yields two {\it
  new} examples of 
groups of derived length $10$ and composition length $24$. We describe
all three groups arising from this case via a parametrised polycyclic
presentation. This has $24$ generators $g_1, \ldots,
g_{24}$ and the relations exhibited in Figures 1 and 2 where
conjugation relations of the form $g_i^{g_j} = g_i$ are omitted in the
latter. The relations with left hand sides $g_2^3$ and $g_2^{g_1}$
contain a parameter $k$ and the three different groups are obtained
for $k \in \{0, 1, 2\}$. The presentations are also available as examples
in the GAP package SpecialExt. Using GAP it is straightforward to
determine the orders of conjugacy class representatives of each of
these three groups, which yields an independent check for the
non-isomorphism of the groups.

\begin{figure}[htb]
\begin{center}
{\small
\begin{align*}
& 
g_{1}^2=1,
g_{2}^3=g_{24}^k,
g_{3}^2=g_{5},
g_{4}^2=g_{5},
g_{5}^2=1,
g_{6}^3=1,
g_{7}^3=1,
g_{8}^3=1,
g_{9}^2=g_{15},
g_{10}^2=g_{15},
g_{11}^2=g_{15},\\
&
g_{12}^2=g_{15},  
g_{13}^2=g_{15},
g_{14}^2=g_{15},  
g_{15}^2=1,
g_{16}^3=1, 
g_{17}^3=1, 
g_{18}^3=1, 
g_{19}^3=1,  
g_{20}^3=1,
g_{21}^3=1,\\
& 
g_{22}^3=1, 
g_{23}^3=1,  
g_{24}^3=1
\end{align*} }
\end{center}
\caption{Power relations}
\end{figure}

\begin{figure}[htb]
\begin{center}
{\small
\begin{align*}
&
g_{2}^{g_{1}}=g_{2}^{2}g_{24}^{3-k},
g_{3}^{g_{1}}=g_{3}g_{5},
g_{3}^{g_{2}}=g_{3}g_{4},
g_{4}^{g_{1}}=g_{3}g_{4}g_{5},
g_{4}^{g_{2}}=g_{3},
g_{4}^{g_{3}}=g_{4}g_{5},
g_{6}^{g_{3}}=g_{6}^{2}g_{7}g_{8},
g_{6}^{g_{4}}=g_{6}g_{7}g_{8}^{2},\\
&
g_{6}^{g_{5}}=g_{6}^{2},
g_{7}^{g_{1}}=g_{6}^{2}g_{7}^{2}g_{8}^{2},
g_{7}^{g_{2}}=g_{6}g_{7}g_{8}^{2},
g_{7}^{g_{3}}=g_{6}g_{7}g_{8}^{2},
g_{7}^{g_{4}}=g_{6}g_{7}^{2}g_{8},
g_{7}^{g_{5}}=g_{7}^{2},
g_{7}^{g_{6}}=g_{7}g_{8},
g_{8}^{g_{1}}=g_{8}^{2},\\
&
g_{9}^{g_{1}}=g_{10}g_{11}g_{12}g_{13}g_{14}g_{15},
g_{9}^{g_{2}}=g_{9}g_{12}g_{14},
g_{9}^{g_{3}}=g_{9}g_{12}g_{14},
g_{9}^{g_{4}}=g_{9}g_{11}g_{12}g_{13}g_{14},
g_{9}^{g_{6}}=g_{13}, 
g_{9}^{g_{8}}=g_{10},\\
&
g_{10}^{g_{1}}=g_{9}g_{12}g_{14}g_{15}, 
g_{10}^{g_{2}}=g_{10}g_{11}g_{12}g_{13}g_{14},
g_{10}^{g_{3}}=g_{10}g_{11}g_{12}g_{13}g_{14}, 
g_{10}^{g_{4}}=g_{10}g_{11}g_{13},
g_{10}^{g_{6}}=g_{14},
\\
& 
g_{10}^{g_{8}}=g_{9}g_{10}, 
g_{10}^{g_{9}}=g_{10}g_{15},
g_{11}^{g_{1}}=g_{9}g_{10}g_{12}g_{13}g_{14}g_{15}, 
g_{11}^{g_{2}}=g_{10}g_{11}g_{14}, 
g_{11}^{g_{3}}=g_{9}g_{10}g_{12}g_{13}g_{14},\\
&
g_{11}^{g_{4}}=g_{10}g_{12}g_{13}g_{14},
g_{11}^{g_{5}}=g_{13},
g_{11}^{g_{6}}=g_{9},
g_{11}^{g_{7}}=g_{12},
g_{11}^{g_{8}}=g_{12},
g_{12}^{g_{1}}=g_{10}g_{11}g_{14}g_{15},\\
&
g_{12}^{g_{2}}=g_{9}g_{10}g_{12}g_{13}g_{14},
g_{12}^{g_{3}}=g_{9}g_{11}g_{12}g_{13},
g_{12}^{g_{4}}=g_{9}g_{10}g_{11}g_{12}g_{13},
g_{12}^{g_{5}}=g_{14},
g_{12}^{g_{6}}=g_{10},
g_{12}^{g_{7}}=g_{11}g_{12},\\
&
g_{12}^{g_{8}}=g_{11}g_{12},
g_{12}^{g_{11}}=g_{12}g_{15},
g_{13}^{g_{1}}=g_{9}g_{10}g_{11}g_{12}g_{14}g_{15},
g_{13}^{g_{2}}=g_{10}g_{12}g_{13},
g_{13}^{g_{3}}=g_{9}g_{10}g_{11}g_{12}g_{14},\\
&
g_{13}^{g_{4}}=g_{10}g_{11}g_{12}g_{14},
g_{13}^{g_{5}}=g_{11},
g_{13}^{g_{6}}=g_{11},
g_{13}^{g_{7}}=g_{13}g_{14},
g_{13}^{g_{8}}=g_{14},
g_{14}^{g_{1}}=g_{10}g_{12}g_{13}g_{15},\\
&
g_{14}^{g_{2}}=g_{9}g_{10}g_{11}g_{12}g_{14},
g_{14}^{g_{3}}=g_{9}g_{11}g_{13}g_{14},
g_{14}^{g_{4}}=g_{9}g_{10}g_{11}g_{13}g_{14},
g_{14}^{g_{5}}=g_{12},
g_{14}^{g_{6}}=g_{12},
g_{14}^{g_{7}}=g_{13},\\
&
g_{14}^{g_{8}}=g_{13}g_{14}, 
g_{14}^{g_{13}}=g_{14}g_{15}, 
g_{16}^{g_{1}}=g_{16}^{2}g_{17}^{2}g_{18}^{2}g_{20}^{2}g_{24}, 
g_{16}^{g_{2}}=g_{16}g_{17}g_{18}g_{20}g_{24}^2,
g_{16}^{g_{3}}=g_{16}g_{17}g_{18}g_{20}g_{24}^2,\\ 
&
g_{16}^{g_{4}}=g_{16}^{2}g_{17}g_{18}g_{20}, 
g_{16}^{g_{5}}=g_{16}^{2}g_{24}, 
g_{16}^{g_{9}}=g_{16}^{2}g_{20}, 
g_{16}^{g_{10}}=g_{16}g_{20}g_{24}^2,
g_{16}^{g_{11}}=g_{16}^{2}g_{18}, 
g_{16}^{g_{12}}=g_{16}g_{18}g_{24}^2,\\ 
&
g_{16}^{g_{13}}=g_{16}^{2}g_{17}g_{24}^2,
g_{16}^{g_{14}}=g_{16}g_{17}g_{24}, 
g_{16}^{g_{15}}=g_{16}^{2}g_{24},
g_{17}^{g_{1}}=g_{16}^{2}g_{17}^{2}g_{18}g_{19}^{2}g_{20}g_{21}^{2}g_{22}^{2},\\ 
&
g_{17}^{g_{2}}=g_{16}^{2}g_{17}^{2}g_{18}g_{19}^{2}g_{20}g_{21}^{2}g_{22}^{2}g_{24},
g_{17}^{g_{3}}=g_{16}g_{17}g_{19}^{2}g_{21}^{2}g_{22}^{2}, 
g_{17}^{g_{4}}=g_{16}g_{18}^{2}g_{19}^{2}g_{21}^{2}g_{22}^{2}g_{24},
g_{17}^{g_{5}}=g_{18}^{2},\\
&  
g_{17}^{g_{6}}=g_{18}g_{24}, 
g_{17}^{g_{7}}=g_{16}g_{17}g_{24}^2,
g_{17}^{g_{8}}=g_{16}^{2}g_{17}g_{24}^2, 
g_{17}^{g_{9}}=g_{17}^{2}g_{21}, 
g_{17}^{g_{10}}=g_{17}g_{21}g_{24}^2, 
g_{17}^{g_{11}}=g_{17}^{2}g_{19},\\ 
& 
g_{17}^{g_{12}}=g_{17}g_{19}g_{24}^2,
g_{17}^{g_{13}}=g_{16}g_{17}g_{24}, 
g_{17}^{g_{14}}=g_{16}g_{17}^{2}g_{24}^2, 
g_{17}^{g_{15}}=g_{17}^{2}g_{24}, 
g_{18}^{g_{1}}=g_{16}^{2}g_{17}g_{18}^{2}g_{19}^{2}g_{20}g_{21}^{2}g_{22}^{2}g_{24}^2,\\
&
g_{18}^{g_{2}}=g_{16}^{2}g_{17}g_{18}^{2}g_{19}^{2}g_{20}g_{21}^{2}g_{22}^{2}g_{24}, 
g_{18}^{g_{3}}=g_{16}g_{18}g_{19}^{2}g_{21}^{2}g_{22}^{2}, 
g_{18}^{g_{4}}=g_{16}g_{17}^{2}g_{19}^{2}g_{21}^{2}g_{22}^{2}g_{24},
g_{18}^{g_{5}}=g_{17}^{2},\\
& 
g_{18}^{g_{6}}=g_{20},
g_{18}^{g_{7}}=g_{16}^{2}g_{18}g_{24}^2,
g_{18}^{g_{8}}=g_{16}^{2}g_{18}g_{24}^2, 
g_{18}^{g_{9}}=g_{18}^{2}g_{22},
g_{18}^{g_{10}}=g_{18}g_{22}g_{24},
g_{18}^{g_{11}}=g_{16}g_{18}g_{24},\\
&
g_{18}^{g_{12}}=g_{16}g_{18}^{2},   
g_{18}^{g_{13}}=g_{18}^{2}g_{19}g_{24}, 
g_{18}^{g_{14}}=g_{18}g_{19}g_{24}^2, 
g_{18}^{g_{15}}=g_{18}^{2}g_{24}^2,
g_{19}^{g_{1}}=g_{17}^{2}g_{18}^{2}g_{19}g_{20}^{2}g_{21}^{2}g_{22}^{2}g_{23}^{2},\\
& 
g_{19}^{g_{2}}=g_{17}g_{18}g_{19}^{2}g_{20}g_{21}g_{22}g_{23},
g_{19}^{g_{3}}=g_{17}^{2}g_{18}^{2}g_{19}g_{23}g_{24}, 
g_{19}^{g_{4}}=g_{17}^{2}g_{18}^{2}g_{19}^{2}g_{23}g_{24}, 
g_{19}^{g_{5}}=g_{19}^{2}g_{24}^2,\\
&  
g_{19}^{g_{6}}=g_{22}g_{24}^2,
g_{19}^{g_{7}}=g_{16}^{2}g_{17}^{2}g_{18}g_{19}, 
g_{19}^{g_{8}}=g_{16}g_{17}^{2}g_{18}^{2}g_{19}g_{24}, 
g_{19}^{g_{9}}=g_{19}^{2}g_{23}g_{24}, 
g_{19}^{g_{10}}=g_{19}g_{23}g_{24}^2,\\
&  
g_{19}^{g_{11}}=g_{17}g_{19}g_{24},
g_{19}^{g_{12}}=g_{17}g_{19}^{2}, 
g_{19}^{g_{13}}=g_{18}g_{19}g_{24}^2, 
g_{19}^{g_{14}}=g_{18}g_{19}^{2}g_{24}, 
g_{19}^{g_{15}}=g_{19}^{2}g_{24}^2, \\
&
g_{20}^{g_{1}}=g_{16}^{2}g_{17}g_{18}g_{19}^{2}g_{20}^{2}g_{21}^{2}g_{22}^{2}, 
g_{20}^{g_{2}}=g_{16}^{2}g_{17}g_{18}g_{19}^{2}g_{20}^{2}g_{21}^{2}g_{22}^{2}g_{24}^2,
g_{20}^{g_{3}}=g_{16}^{2}g_{17}g_{18}g_{19}^{2}g_{20}^{2}g_{21}^{2}g_{22}^{2}g_{24}^2,\\
&
g_{20}^{g_{4}}=g_{16}^{2}g_{17}^{2}g_{18}^{2}g_{19}^{2}g_{20}g_{21}^{2}g_{22}^{2}, 
g_{20}^{g_{5}}=g_{20}^{2}g_{24}^2, 
g_{20}^{g_{6}}=g_{17}g_{24}^2, 
g_{20}^{g_{8}}=g_{16}^{2}g_{20}g_{24}^2, 
g_{20}^{g_{9}}=g_{16}g_{20}g_{24},\\
& 
g_{20}^{g_{10}}=g_{16}g_{20}^{2},
g_{20}^{g_{11}}=g_{20}^{2}g_{22},  
g_{20}^{g_{12}}=g_{20}g_{22}g_{24}, 
g_{20}^{g_{13}}=g_{20}^{2}g_{21}g_{24}, 
g_{20}^{g_{14}}=g_{20}g_{21}g_{24}^2, 
g_{20}^{g_{15}}=g_{20}^{2}g_{24}^2,\\
& 
g_{20}^{g_{19}}=g_{20}g_{24}^2, 
g_{21}^{g_{1}}=g_{17}^{2}g_{18}^{2}g_{19}^{2}g_{20}^{2}g_{21}g_{22}^{2}g_{23}^{2}g_{24},
g_{21}^{g_{2}}=g_{17}g_{18}g_{19}g_{20}g_{21}^{2}g_{22}g_{23}g_{24},\\
& 
g_{21}^{g_{3}}=g_{16}g_{19}^{2}g_{20}^{2}g_{22}^{2}g_{23}g_{24},
g_{21}^{g_{4}}=g_{16}^{2}g_{19}g_{20}^{2}g_{21}g_{23}g_{24}^{2},
g_{21}^{g_{5}}=g_{22}^{2}, 
g_{21}^{g_{6}}=g_{19}, 
g_{21}^{g_{7}}=g_{20}g_{21}g_{24}^2,\\
&
g_{21}^{g_{8}}=g_{16}g_{17}^{2}g_{20}^{2}g_{21}g_{24}, 
g_{21}^{g_{9}}=g_{17}g_{21}g_{24},
g_{21}^{g_{10}}=g_{17}g_{21}^{2},
g_{21}^{g_{11}}=g_{21}^{2}g_{23}g_{24}, 
g_{21}^{g_{12}}=g_{21}g_{23}g_{24}^2,\\
&
g_{21}^{g_{13}}=g_{20}g_{21}g_{24}^2, 
g_{21}^{g_{14}}=g_{20}g_{21}^{2}g_{24},
g_{21}^{g_{15}}=g_{21}^{2}g_{24}^2, 
g_{21}^{g_{18}}=g_{21}g_{24},
g_{22}^{g_{1}}=g_{17}^{2}g_{18}^{2}g_{19}^{2}g_{20}^{2}g_{21}^{2}g_{22}g_{23}^{2}g_{24},\\
&  
g_{22}^{g_{2}}=g_{17}g_{18}g_{19}g_{20}g_{21}g_{22}^{2}g_{23}g_{24},
g_{22}^{g_{3}}=g_{16}g_{19}^{2}g_{20}^{2}g_{21}^{2}g_{23}g_{24},
g_{22}^{g_{4}}=g_{16}^{2}g_{19}g_{20}^{2}g_{22}g_{23}g_{24}^{2}, 
g_{22}^{g_{5}}=g_{21}^{2},\\
&   
g_{22}^{g_{6}}=g_{21}g_{24}, 
g_{22}^{g_{7}}=g_{20}^{2}g_{22}g_{24},
g_{22}^{g_{8}}=g_{16}g_{18}^{2}g_{20}^{2}g_{22}, 
g_{22}^{g_{9}}=g_{18}g_{22}g_{24}^2, 
g_{22}^{g_{10}}=g_{18}g_{22}^{2}, 
g_{22}^{g_{11}}=g_{20}g_{22}g_{24}^2,\\
&  
g_{22}^{g_{12}}=g_{20}g_{22}^{2},
g_{22}^{g_{13}}=g_{22}^{2}g_{23},
g_{22}^{g_{14}}=g_{22}g_{23}g_{24}^2, 
g_{22}^{g_{15}}=g_{22}^{2}g_{24}, 
g_{22}^{g_{17}}=g_{22}g_{24}, 
g_{23}^{g_{1}}=g_{19}^{2}g_{21}^{2}g_{22}^{2}g_{23}g_{24}^2,\\
&  
g_{23}^{g_{2}}=g_{19}^{2}g_{21}^{2}g_{22}^{2}g_{23}g_{24},
g_{23}^{g_{3}}=g_{16}^{2}g_{17}^{2}g_{18}^{2}g_{19}^{2}g_{21}g_{22}g_{23}^{2}g_{24}^2, 
g_{23}^{g_{4}}=g_{16}^{2}g_{17}g_{18}g_{19}^{2}g_{21}g_{22}g_{23}, 
g_{23}^{g_{5}}=g_{23}^{2}g_{24}^2,\\
& 
g_{23}^{g_{7}}=g_{20}^{2}g_{21}^{2}g_{22}g_{23}, 
g_{23}^{g_{8}}=g_{16}^{2}g_{17}g_{18}g_{19}^{2}g_{20}g_{21}^{2}g_{22}^{2}g_{23}g_{24}^2, 
g_{23}^{g_{9}}=g_{19}g_{23}g_{24}^2,
g_{23}^{g_{10}}=g_{19}g_{23}^{2}g_{24},\\ 
&   
g_{23}^{g_{11}}=g_{21}g_{23}g_{24}^2,
g_{23}^{g_{12}}=g_{21}g_{23}^{2}g_{24}, 
g_{23}^{g_{13}}=g_{22}g_{23}g_{24}, 
g_{23}^{g_{14}}=g_{22}g_{23}^{2}, 
g_{23}^{g_{15}}=g_{23}^{2}g_{24}^2,
g_{23}^{g_{16}}=g_{23}g_{24}^2,\\
&    
g_{24}^{g_{1}}=g_{24}^{2}
\end{align*} }
\end{center}
\caption{Conjugation relations}
\end{figure}

\section*{Acknowledgements}

We thank Stephen Glasby for pointing us at the interesting example of
the group $M$ from Section~\ref{sec_dl}.

\def\cprime{$'$}

\end{document}